\title{Efficient algorithms for tuple domination on \\ co-biconvex graphs and web graphs \thanks{An extended abstract of this work appeared in the Proceedings of LAGOS 2019~\cite{DLLLAGOS2019}}}
\author{M.P. Dobson\inst{1}
\and
V. Leoni\inst{1,2}
\and
M. I. Lopez Pujato\inst{1,2}
\institute{FCEIA, Universidad Nacional de Rosario, Rosario, Santa Fe, Argentina\\
\email{pdobson@fceia.unr.edu.ar}
\and
CONICET, Argentina\\
\email{valeoni@fceia.unr.edu.ar, lpujato@fceia.unr.edu.ar}
}}
\begin{document}
\maketitle
				
\begin{abstract} 
A vertex in a graph dominates itself and each of its adjacent vertices. The \emph{$k$-tuple domination problem}, for a fixed positive integer $k$, is to find  a minimum sized vertex subset in a given graph such that every vertex is dominated by at least $k$ vertices of this set. 
From the computational point of view, this problem is NP-hard. It follows from previous works by Bui-Xuan et al.~(2013) and by Belmonte et al.~(2013)  ---in the context of locally checkable vertex subset problems in  graph classes with quickly computable and  bounded min-width--- that the $k$-tuple domination problem is  solvable in time $\mathcal{O}(|V(G)|^{6k+4})$ in the class of circular-arc graphs. 
In this work, we develop  faster algorithms for $k$-tuple domination in co-biconvex graphs and in web graphs, which are incomparable subclasses of concave-round graphs and thus of circular-arc graphs.
On the one hand, we  present an $\mathcal{O}(n^2)$-time algorithm for solving it for each $2\leq k\leq |U|+3$, where $U$ is the set of universal vertices and $n$ the total number of vertices of the input co-biconvex graph.
On the other hand, the study of this problem on web graphs was already started by Argiroffo et al. (2010)  from a polyhedral point of view only for the cases $k=2$ and $k=d(G)$,  where $d(G)$ equals the degree of  each vertex of the input web graph $G$. We complete this study for web graphs from an algorithmic point of view, by designing a linear-time algorithm based on the modular arithmetic for integer numbers. The algorithms presented in this work are mutually  independent but both exploit the circular properties of the augmented adjacency  matrices of each studied graph class.

\bigskip
\noindent{\textbf{Keywords:}} $k$-tuple dominating sets, augmented  adjacency matrices, stable sets, modular arithmetic
\end{abstract}

\section{Introduction and preliminaries}\label{prem}

Domination in graphs is useful in various applications. In this work we address $k$-tuple domination.  Given a graph $G$ and a positive integer $k$, a  \emph{$k$-tuple dominating set} in $G$ is a set $D \subseteq V (G)$ satisfying that every vertex of $G$ has  in its closed neighborhood at least $k$ vertices of $D$. 1-tuple dominating sets are already well known as dominating sets. 

There exist many variations of domination. Among them, $k$-domination and total $k$-domination which regard  slight differences in their definitions. Specifically, given a graph $G$ and a positive integer $k$, a \emph{$k$-dominating set} in $G$ is a set $D\subseteq V(G)$ such that for every vertex $v\in V(G)\setminus D$, its open neighborhood has at least $k$ members from $D$, and
a \emph{total $k$-dominating set} in $G$ is a set $D\subseteq V(G)$ such that the open neighborhood of every vertex $v\in V(G)$ has at least $k$ members from $D$.

The differences in these three variations of domination make subclasses of tha class of  circular-arc graphs adequate and useful mostly due to their relation to ``circular'' issues, such as in forming sets of representatives, in resource allocation in distributed computing systems, in coding theory because of their relation to ``circular'' codes~\cite{Tucker}, and in testing for circular arrangements of genetic molecules~\cite{Klee}.

Concerning computational complexity results, the decision problem (fixed $k$) associated with these three variations of domination is NP-hard. In particular, $k$-tuple domination  is known to be  NP-hard  even for chordal graphs~\cite{LiaoChang2003}, and also hard to approximate~\cite{MR3027964}. As far as known, the class of strongly chordal graphs is the only maximal subclass of chordal graphs for which an efficient algorithm for every $k$ has been already developed~\cite{LiaoChang2003}. In another direction,  the problem is efficiently solved  for every $k$ on  $P_4$-tidy graphs~\cite{DLN2011} as well as on every graph class with bounded clique-width~\cite{MR3327090}.
As far as we  know, for interval graphs  and the case $k=1$, an efficient algorithm is developed in~\cite{chang}. Besides for the remaining values of $k$, an efficient algorithm is shown in~\cite{LiaoChang2003}  for strongly chordal graphs (which constitute a superclass of proper interval graphs). With a different approach,  polynomial-time algorithms were recently provided for $k$-domination and  total $k$-domination  for proper interval graphs, for each fixed value of $k$~\cite{LP1}.

For circular-arc graphs ---a superclass of interval graphs--- efficient algorithms are presented for the case $k=1$ in~\cite{chang} and~\cite{Hsu}, and for the case $k=2$ in recent work~\cite{Sinha}. 

Equivalently, a  $k$-tuple dominating set in a graph $G$ can be defined as a vertex subset $D$ such that for every vertex $v \in V (G)$, if it is in $D$ it has at least $k-1$ neighbors in $D$, and if it is not, it has at least $k$ neighbors in $D$. Following Definition 2 in \cite{Bui2013}, we  can then see   $k$-tuple domination problems belonging to the class of locally checkable vertex subset problems.   Since locally checkable vertex subset problems are solvable
in polynomial time in  graph classes with  bounded and quickly computable min-width~\cite{Bui2013} and circular-arc graphs have 
bounded (by 2) and quickly computable min-width~\cite{Belmonte2013},  we have  that $k$-tuple domination is  solvable in polynomial time. 
More specificaly,

\begin{sloppypar}
\begin{itemize}
  \item In~\cite{Bui2013}, Bui-Xuan et al.~give algorithms for a large class of locally checkable vertex problems.
     The algorithms run in time $\mathcal{O}(n^4\cdot {\it nec}_d(T,\delta)^3)$, where $n$ is the order of the input graph,
     $d$ is a parameter associated to the problem, and ${\it nec}_d(T,\delta)$ is the number of equivalence classes of a problem-specific equivalence relation on subsets of vertices, defined on a given tree decomposition  $(T,\delta)$ of $G$.
     For $k$-tuple domination problems, we have $d = k$, and hence the running time is $\mathcal{O}(n^4\cdot {\it nec}_k(T,\delta)^3)$.
  \item Belmonte and Vatshelle show in~\cite[Lemma 4]{Belmonte2013} that given an $n$-vertex circular-arc graph $G$ and a positive integer $d$, one can in polynomial time compute a tree decomposition  $(T,\delta)$ of $G$ with ${\it nec}_d(T,\delta) \le n^{2d}$.
\end{itemize}

Combining the above two results, we obtain, for every positive integer $k$, an algorithm for solving $k$-tuple domination in the class of $n$-vertex circular-arc graphs in time $\mathcal{O}(n^{6k + 4})$, as stated above.
\end{sloppypar}

\bigskip


We remark that none of the algorithms referenced above exploit the particular structure of the augmented adjacency matrices of each studied graph class. 

\subsection{Our results and approach}

In this work we address the $k$-tuple domination problem on two incomparable subclasses of the class of concave-round graphs  by exploiting the structure of the augmented adjacency matrices of these graphs. We develop novel and faster algorithms for these specific graph classes that avoid the exponential dependency of the value $k$.
In Section \ref{def}, we give some general definitions and notation.
In Section \ref{seccion3}, we explore  co-biconvex graphs and solve the problem for $k=2$ and $k=3$ for these graphs. This allows to develop an $\mathcal{O}(n^2)$-time algorithm for $k$-tuple domination for $2\leq k \leq |U|+3$, where $U$ is the set of universal vertices, if any, of the input graph with $n$ vertices. 
In Sections \ref{seccion4} and \ref{seccion5}, we generalize and complete the study of this problem on web graphs initiated ---by means of polyhedral arguments--- in~\cite{AEU}. Our approach for web graphs  is based on circular property of their augmented adjacency matrices and the modular arithmetic for integer numbers. We provide the exact value for the  minimum size of a $k$-tuple dominating set for every admissible $k$ and every web graph.

\section{Definitions and notation}\label{def}

In this work we consider finite simple graphs $G$, where $V(G)$ and $E(G)$ denote the vertex and edge sets, respectively. 
Given $S\subseteq  V(G)$, the subgraph of $G$ induced by $S$ (i.e., the one with vertex set $S$ and edge set $\{uv: uv\in E(G), \{u,v\}\subseteq  S\}$) is denoted by $G[S]$.
Given $S\subseteq  V(G)$, the induced subgraph $G[V(G)\setminus S]$ is denoted by $G-S$.
For simplicity, we write $G-v$ instead of $G-\{v\}$, for $v\in V(G)$. 

The \emph{(closed) neighborhood} of $v\in V(G)$ is $N_G[v] = N_G(v) \cup \{v\}$, where $N_G(v)=\{u\in V(G): uv \in E(G) \}$. The \emph{minimum degree} of $G$ is denoted by $\delta(G)$ and is the minimum between the cardinalities of $N_G(v)$ for all $v\in V(G)$.

A vertex $v \in V(G)$ is \emph{universal} in $G$ if $N_G[v]=V(G)$. 

A \emph{clique} in $G$ is a subset of pairwise adjacent vertices in $G$.

 A \emph{stable set} in $G$ is a subset of pairwise non-adjacent vertices in $G$ and the cardinality of a stable set of maximum cardinality in $G$ is denoted by $\alpha(G)$ and called the \emph{independence (or stability) number} of $G$.

A graph $G$ is an \emph{interval graph} if it has an intersection model consisting of intervals on the real line, that is, if there exists a family ${\cal I}$ of intervals on the real line and a one-to-one correspondence between the set of vertices of $G$ and the intervals of ${\cal I}$ such that two vertices are adjacent  in $G$ if and only if the corresponding intervals intersect. A \emph{proper interval graph} is an interval graph that has an intersection  model in which no interval contains another one. 

A graph $G$ is \emph{circular-arc} if  it has an intersection model consisting of arcs  on a circle, that is, if there is a one-to-one correspondence between the vertices of $G$ and a family of arcs on a circle such that two distinct vertices are adjacent in $G$ if and only if the corresponding arcs intersect. 

A \emph{$0,1$-matrix} is a matrix consisted of 0's (0-entries) and 1's (1-entries).
The square $0,1$-matrix whose entries are all 1's is denoted by $J$ and the identity matrix by $I$, both of appropriate sizes.

For a graph $G$, the \emph{adjacency matrix} of $G$ is the square  $0,1$-matrix $M(G)$ with $|V(G)|$ rows (and $|V(G)|$ columns) defined with entry $m_{ij}=1$ if and only if vertices $v_i$ and $v_j$ are adjacent. Note that $M(G)$ is symmetric and has 0's on the main diagonal. The  \emph{augmented adjacency matrix} or \emph{neighborhood matrix} $M^*(G)$ is defined as $M^*(G):=M(G) + I $, i.e.,  $M(G)$ with 1's added on the main diagonal.

A $0,1$-matrix has the \emph{consecutive 0's property} (C0P) for columns if there is a permutation of its rows that places the 0's consecutively in every column~\cite{Tucker}. Similarly, it has the \emph{consecutive 1's property} (C1P) for columns if there is a permutation of its rows that places the 1's consecutively in every column~\cite{fulkerson}. 

An ordering $<$ of $X$ in a bipartite graph $G$ with bipartition $X, Y$ of $V(G)$  has the \emph{adjacency property} if for every vertex $y\in Y$, $N_G(y)$ consists of vertices that are consecutive (an interval) in that ordering. 
A bipartite graph $G$  with bipartition $X, Y$ of $V(G)$ is \emph{biconvex} if there is an ordering of $X$ that fulfills the adjacency property.
\emph{Co-biconvex} graphs are the complementary graphs  of biconvex graphs~\cite{Safe2019}.

Notice that $M^*(G)$ has the C0P for columns if and only if $M(\bar{G})$ has the C1P for columns, where $\bar{G}$ stands for the complementary graph of $G$. Due to A. Tucker \cite{Tucker2}, $M(\bar{G})$ has the C1P for columns if and only if $G$  is co-biconvex:

A more general property is the following: a $0,1$-matrix has  the \emph{circular 1's property} (Circ1P) for columns if its rows can be permuted so that the  1's in each column are circular, i.e., appear in a circularly consecutive fashion by thinking of the matrix as wrapped around a cylinder  so that the first  row is adjacent to the last.  A. Tucker proved that all  graphs whose augmented adjacency matrices has the Circ1P for columns are circular-arc~\cite{Tucker}. Graphs whose augmented adjacency matrix has the Circ1P for columns are later called  \emph{concave-round} graphs in \cite{BanJ2000}. 
The resulta by  Tucker \cite{Tucker2} then can be stated as:, $M(\bar{G})$ has the C1P for columns if and only if $\bar{G}$  is bipartite and concave-round.

Combining results  by Tucker~\cite{Tucker} and Hell and Huang~\cite{Hell}, in~\cite{Safe2019} it is proved that co-biconvex  and more generally, all concave-round graphs are circular-arc graphs.

\bigskip

Given  $n, m \in \mathbb{Z}^{+}$ with $m\geq 1$ and $n\geq 2m+1$, a \emph{web} graph denoted by $W_n^m$  is a graph where $V(W_n^m)=\left\{v_1,\ldots,v_{n}\right\}$ and $v_iv_j\in E(W_n^m)$ if and only if $j\equiv i \pm l$ $\left(mod\;n\right)$, $l\in \left\{1,\ldots,m\right\}$~\cite{[7]}. It is clear that $\left|N_{W^m_n}[v]\right|=2m+1$ for $v\in V(W_n^m)$. 

Web graphs  are in particular examples of concave-round graphs. Examples of all graphs mentioned in this work and their corresponding augmented adjacency matrices are shown in Figures \ref{fig1}  and \ref{fig2} respectively.

\begin{figure}[ht]
\centering
\includegraphics[scale=1.1 ]{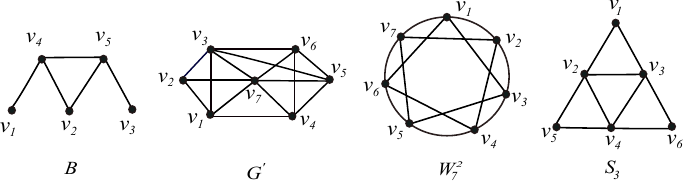}
 
\caption{ $B$ B is proper interval and neither co-biconvex nor a web graph, $G'$
is co-biconvex and neither
proper interval nor a web graph, the web graph  $W_7^2$
is neither co-biconvex nor proper interval,
and $S_3$ is concave-round but not co-biconvex, nor proper interval, nor a web graph.}\label{fig1}
\end{figure}

\begin{center}

\begin{figure}[ht]

$$\left(
\begin{array}{ccccc}
1&0&0&1&0\\
0&1&0&1&1\\
0&0&1&0&1\\
1&1&0&1&1\\
0&1&1&1&1
\end{array}
\right)
\; \;  \; \; \; \;   \left(
\begin{array}{ccccccc}
1 & 1 & 1 & 1 & 0 & 0 & 1\\
1 & 1 & 1 & 0 & 0 & 0 & 1\\
1 & 1 & 1 & 0 & 1 & 1 & 1\\
1 & 0 & 0 & 1 & 1 & 1 & 1\\
0 & 0 & 1 & 1 & 1 & 1 & 1\\
0 & 0 & 1 & 1 & 1 & 1 & 1\\
1 & 1 & 1 & 1 & 1 & 1 & 1\\
\end{array}
\right)
\; \; \; \; \; \; \left(
\begin{array}{ccccccc}
1&1&1&0&0&1&1\\
1&1&1&1&0&0&1\\
1&1&1&1&1&0&0\\
0&0&1&1&1&1&1\\
1&0&0&1&1&1&1\\
1&1&0&0&1&1&1\\
\end{array}
\right)
 \; \; \; \; \; \; \left(
\begin{array}{cccccc}
1&1&1&0&0&0\\
1&1&1&1&1&0\\
1&1&1&1&0&1\\
0&1&1&1&1&1\\
0&1&0&1&1&0\\
0&0&1&1&0&1\\
\end{array}
\right)$$

\caption{Augmented adjacency  matrices corresponding in the same order to graphs in Figure \ref{fig1}.}\label{fig2}
\end{figure}

\end{center}
%
%

Booth and  Lueker~\cite{booth2} devised an $\mathcal{O}(m+n+o)$-time algorithm to determine if a $0,1$-matrix of order $m \times n$ has the C1P for columns and to  obtain a desired row permutation when one exists, where $m$, $n$ and $o$ are respectively the number of rows, columns and 1's of the given matrix. Observe that a  $0,1$-matrix has the C0P for columns if and only if the matrix obtained by interchanging $0$'s by $1$'s and $1$'s by $0$'s has the C1P for columns. Since this exchange can be done in $\mathcal{O}(mn)$, for a square $0,1$-matrix we count on an $\mathcal{O}(n^2)$-time  algorithm to decide if it has the C0P for columns and to
 obtain a desired row permutation when one exists. 
Also, Booth and Lueker~\cite{booth2} devised a linear-time algorithm  for the Circ1P for columns.

%
%
%
%

Every numerical interval will be an integer interval, i.e.,  one of the form $\left[a,b\right]=\{x\in \mathbb{Z}: a \leq x\leq b\}$ with  endpoints $a,b\in \mathbb{Z}$ and $a<b$ together with all integer numbers  that are between $a$ and $b$.

The sets of integer numbers and positive integer numbers are denoted respectively by $\mathbb{Z}$ and $\mathbb{Z}^{+}$.

For a non-negative integer $k$, $D\subseteq V(G)$ is a \emph{$k$-tuple dominating set} in $G$ if $|N_G[v]\cap D|\geq k$, for every $v\in V(G)$. Notice that $G$ has $k$-tuple dominating sets if and only if $k \leq \delta(G)+1$ and, if  $G$ has a $k$-tuple dominating set $D$, then $|D|\geq k$. When $k\leq \delta(G)+1$, $\gamma_{\times k}(G)$ denotes the  cardinality of a  $k$-tuple dominating set in $G$ of minimum size and  $\gamma_{\times k}(G)=+\infty$, when $k> \delta(G)+1$. $\gamma_{\times k}(G)$ is called the \emph{$k$-tuple dominating number} of $G$~\cite{HararyH}. Observe that  $\gamma_{\times 1}(G)= \gamma(G)$, the usual domination number, i.e., the concept of tuple domination generalizes the well-known concept of domination in graphs.
Besides, note that $\gamma_{\times 0}(G)=0$  for every graph $G$. When $G$ is not connected, the $k$-tuple dominating number of $G$ is equal to the sum of the $k$-tuple dominating numbers of its connected components. Thus throughout this work, $G$ will be a  connected graph and the integer number $k$ will be less or equal $\delta(G) + 1$.

For a fixed positive integer $k$, the \emph{$k$-tuple domination problem} is to find in a given graph $G$, a $k$-tuple dominating set in $G$ of size $\gamma_{\times k}(G)$.


\section{$k$-tuple domination on co-biconvex graphs} \label{seccion3}

From the definition of tuple domination, it is clear that $\gamma_{\times k}(G)\geq k$ for every graph $G$ and positive integer $k$. 
Moreover, when $G$ has universal vertices, we can prove the following:

\begin{lemma}\label{Umasdek}
Let $G$ be a graph, $U$ the set of its  universal vertices and $k$ a positive integer. Then  $\gamma_{\times k}(G)=k$ if and only if $\left|U\right|\geq k$.
\end{lemma}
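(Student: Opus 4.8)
The plan is to prove the two implications separately, each being a short deduction from the two observations recorded immediately before the statement: that $\gamma_{\times k}(G)\ge k$ for every graph $G$ and positive integer $k$, and that a subset $D\subseteq V(G)$ is a $|D|$-tuple dominating set exactly when every vertex of $D$ is universal in $G$.

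For the \emph{``if''} direction, I would assume $|U|\ge k$ and exhibit a $k$-tuple dominating set of size $k$. Choosing any $D\subseteq U$ with $|D|=k$, each vertex of $D$ is universal, so by the second observation $D$ is a $|D|$-tuple dominating set, that is, a $k$-tuple dominating set; concretely, for every $v\in V(G)$ and every $u\in D$ one has $u\in N_G[v]$ (if $u\neq v$ this holds because $u$ universal gives $uv\in E(G)$, and if $u=v$ it is trivial), hence $|N_G[v]\cap D|=|D|=k$. This yields $\gamma_{\times k}(G)\le k$, and combined with $\gamma_{\times k}(G)\ge k$ we obtain $\gamma_{\times k}(G)=k$.

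For the \emph{``only if''} direction, I would assume $\gamma_{\times k}(G)=k$ and take a $k$-tuple dominating set $D$ realizing this minimum, so that $|D|=k$. Then $D$ is in particular a $|D|$-tuple dominating set, and the second observation forces every vertex of $D$ to be universal in $G$, i.e. $D\subseteq U$. Therefore $|U|\ge|D|=k$, as claimed.

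I do not expect any genuine obstacle here: the lemma is essentially a reformulation of the two stated observations. The only point deserving a line of care is verifying that a set of $k$ universal vertices dominates every vertex at least $k$ times, including dominating its own members, which is immediate from the definition of a universal vertex through closed neighborhoods; and noting that the statement remains consistent in the degenerate regime $k>\delta(G)+1$, where $\gamma_{\times k}(G)=+\infty$ and necessarily $|U|<k$, so neither side of the equivalence holds.
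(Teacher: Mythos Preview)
Your proof is correct and matches the paper's approach exactly: the lemma is stated in the paper without a separate proof, as an immediate consequence of the two observations you identified (that $\gamma_{\times k}(G)\ge k$ always, and that $D$ is a $|D|$-tuple dominating set iff every vertex of $D$ is universal). Your write-up simply makes explicit the one-line deductions the paper leaves to the reader.
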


\begin{proof}
Suppose $\left|U\right|\geq k$ and take any subset of $k$ vertices from $U$. It is clear that this subset is a $k$-tuple dominating set in $G$ and thus  $\gamma_{\times k}(G) \leq k$. Then  $\gamma_{\times k}(G)=k$. 

Conversely, suppose $\gamma_{\times k}(G)=k$ and let $D$ be a minimum $k$-tuple dominating set in $G$. Since $|N[v] \cap D|\geq k$ for every $v\in V(G)$ and $|D|=k$, it turns out that $D \subseteq  N[v] $ for every $v\in V(G)$ impying that $D \subseteq U$. Therefore $|U|\geq |D|=k$.\end{proof}

Notice that, when $u$ is  a universal vertex in a graph $G$ and $D\subset V(G)$ is a $k$-tuple dominating set in $G$ with $u \notin D$, then by interchanging $u$ with any other vertex of $D$, we obtain a $k$-tuple dominating set of the same size containing $u$.
Then, the following relationship holds:


\begin{proposition}\label{masuno2}
Let  $G$ be a graph,  $U$ the set of its  universal vertices and $k$ a positive integer with $\left|U\right|\leq k-1$. Then   $$\gamma_{\times k}(G)=\gamma_{\times (k-\left|U\right|)}(G-U)+\left|U\right|.$$
\end{proposition}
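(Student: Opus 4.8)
The plan is to prove Proposition~\ref{masuno2} by induction on $|U|$, using Lemma~\ref{masuno1} as the base step and the inductive engine. The key observation making the induction go through is that removing one universal vertex from $G$ does not disturb the universality of the remaining vertices of $U$: if $u\in U$ and $w\in U\setminus\{u\}$, then $w$ is still adjacent to every vertex of $G-u$, so $w$ is universal in $G-u$. Hence the set of universal vertices of $G-u$ is exactly $U\setminus\{u\}$ (it clearly cannot be larger, since a vertex non-universal in $G$ that is non-adjacent only to some $x\neq u$ remains non-universal in $G-u$; and if it was non-adjacent only to $u$ in $G$ then it had degree $|V(G)|-2$, so in $G-u$ it is universal — but such a vertex cannot exist when $w\in U$ unless it equals... actually one must note a vertex non-adjacent only to $u$ would still not be in $U$ for $G$, yet becomes universal in $G-u$; this edge case needs a line of care, but it does not affect the counting because $U\setminus\{u\}\subseteq U(G-u)$ always holds and that inclusion is all the induction needs, combined with applying the hypothesis to $G-u$ with \emph{its own} universal set).

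First I would state the induction. If $|U|=0$ the claim is the trivial identity $\gamma_{\times k}(G)=\gamma_{\times k}(G)$. If $|U|=1$, say $U=\{u\}$, then Lemma~\ref{masuno1} gives $\gamma_{\times k}(G)=\gamma_{\times(k-1)}(G-u)+1$, which is exactly the statement since $k-|U|=k-1$ and $G-U=G-u$. For the inductive step, assume the result holds whenever the universal set has size $m-1$, and let $|U|=m$ with $1\le m\le k-1$. Pick any $u\in U$. By Lemma~\ref{masuno1}, $\gamma_{\times k}(G)=\gamma_{\times(k-1)}(G-u)+1$. Now set $G':=G-u$; as noted above, $U':=U\setminus\{u\}$ is a set of universal vertices of $G'$ of size $m-1$, and $m-1\le k-2=(k-1)-1$, so the inductive hypothesis applies to $G'$ with parameter $k-1$, yielding $\gamma_{\times(k-1)}(G')=\gamma_{\times((k-1)-(m-1))}(G'-U')+(m-1)=\gamma_{\times(k-m)}(G-U)+(m-1)$, where I use $G'-U'=(G-u)-(U\setminus\{u\})=G-U$. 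Substituting back gives $\gamma_{\times k}(G)=\gamma_{\times(k-m)}(G-U)+(m-1)+1=\gamma_{\times(k-|U|)}(G-U)+|U|$, completing the induction.

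The one point requiring attention — and really the only non-routine thing here — is the claim that after deleting a universal vertex, the remaining vertices of $U$ stay universal and, more importantly, that the inductive hypothesis can legitimately be invoked on $G-u$. For the induction to close cleanly I only need $U\setminus\{u\}$ to be \emph{a} set of universal vertices of $G-u$ (so that $G-u$ minus that set equals $G-U$) together with the size bound $m-1\le (k-1)-1$; I do not actually need $U\setminus\{u\}$ to be the \emph{full} universal set of $G-u$. One should double-check that the hypothesis of Proposition~\ref{masuno2}, namely that the relevant universal-set size is $\le$ (parameter $-1$), is stated for a distinguished set of universal vertices rather than necessarily the maximal one; if the statement is read as applying to the set of \emph{all} universal vertices, then a brief lemma is needed to confirm that no new universal vertex is created, i.e. $U(G-u)=U\setminus\{u\}$, which follows because any $x\notin U$ is non-adjacent to some $y\in V(G)$, and since $|U|\ge 2$ we can... in fact more directly: if $x$ were universal in $G-u$ but not in $G$, then $x$ is non-adjacent in $G$ only to $u$; but then for $w\in U\setminus\{u\}$ we have $xw\in E(G)$, no contradiction arises directly, so such an $x$ could a priori exist. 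The clean fix is to phrase everything in terms of "a set of universal vertices" throughout, or simply to note this subtlety in a sentence and proceed — either way, the counting is unaffected. I expect this bookkeeping about universal sets under vertex deletion to be the only real obstacle; everything else is a direct two-line application of Lemma~\ref{masuno1} wrapped in induction.
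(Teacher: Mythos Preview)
Your approach is correct and is precisely what the paper intends: Proposition~\ref{masuno2} is stated there without proof, as an immediate generalization of Lemma~\ref{masuno1}, so induction on~$|U|$ via Lemma~\ref{masuno1} is the expected argument.

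The subtlety you flag is real and your ``clean fix'' is the right one: new universal vertices \emph{can} appear in $G-u$ (any vertex non-adjacent only to $u$ becomes universal there), so inducting on the \emph{full} universal set does not close as written; one should instead prove the slightly stronger statement ``for any set $U'$ of universal vertices with $|U'|\le k-1$, $\gamma_{\times k}(G)=\gamma_{\times(k-|U'|)}(G-U')+|U'|$'' by induction on $|U'|$, after which the step goes through verbatim since $U\setminus\{u\}$ is a set of universal vertices of $G-u$ regardless of whether it is all of them. Alternatively, a direct two-inequality proof paralleling that of Lemma~\ref{masuno1} (iterate Remark~\ref{universal} to assume $U\subseteq D$, then remove/add $U$) avoids the issue entirely and is arguably tidier than the induction.
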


\begin{proof}
Let $D$ be a $k$-tuple dominating set in $G$ with $\left|D\right|=\gamma_{\times k}(G)$. We know that $|D| \geq k$ and thus $|D| \geq |U|+ 1$.

From the observation above,   there exists  a  $k$-tuple dominating set $D'$ of $G$ such that $\left|D'\right|=\left|D\right|$ and $U\subseteq D'$. Then $D' \setminus U$ is a $(k-\left|U\right|)$-tuple dominating set in $G-U$ and thus  $\gamma_{\times(k-\left|U\right|)}(G-U) \leq\left|D'\right|-\left|U\right|=\gamma_{\times k}(G)-\left|U\right|. $

On the other hand, let $D$ be a minimum $(k-\left|U\right|)$-tuple dominating  set in $G-U$. It is clear that  $D\cup U$ is a $k$-tuple dominating set in $G$ since every vertex of $U$ is  universal in $G$.
Then $\gamma_{\times k}(G)\leq \left|D\cup U\right|=\left|D\right|+\left|U\right|=\gamma_{\times(k-\left|U\right|)}(G-U)+\left|U\right|$, and the proof is complete.\end{proof}

The following corollary is clear from Lemma \ref{Umasdek} and Proposition \ref{masuno2}.

\begin{corollary}
Let $G$ be  a graph, $U$  the set of its  universal vertices with $U\neq \emptyset$ and $k$ a positive integer.   If for some $ r\in \mathbb{Z}^+$ the value $\gamma_{\times i} (G-U)$ can be found in polynomial-time for $i=1, \ldots, r$,  then $\gamma_{\times k} (G)$ can be found in polynomial-time for every $k$ with $1 \leq k \leq |U|+r$.
\end{corollary}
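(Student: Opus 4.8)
The plan is to derive the corollary directly from the two results already established: Lemma \ref{Umasdek} (which settles the case $|U| \geq k$) and Proposition \ref{masuno2} (which handles the case $|U| \leq k-1$). The key observation is that these two results together express $\gamma_{\times k}(G)$ in terms of quantities computable on $G-U$, so the whole argument reduces to a case split on whether $k$ is small or large relative to $|U|$.

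First I would treat the case $1 \leq k \leq |U|$. Here Lemma \ref{Umasdek} gives $\gamma_{\times k}(G) = k$ immediately, since $|U| \geq k$; this is a constant-time computation once $|U|$ is known, and $|U|$ itself is obtained in polynomial time by scanning the closed neighborhoods of all vertices. Next I would treat the case $|U|+1 \leq k \leq |U|+r$. Writing $j := k - |U|$, we have $1 \leq j \leq r$ and $|U| \leq k-1$, so Proposition \ref{masuno2} applies and yields $\gamma_{\times k}(G) = \gamma_{\times j}(G-U) + |U|$. By hypothesis $\gamma_{\times i}(G-U)$ is computable in polynomial time for each $i = 1, \dots, r$, and in particular for $i = j$; adding the constant $|U|$ keeps the running time polynomial. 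Combining the two cases covers exactly the range $1 \leq k \leq |U|+r$, which is what the corollary asserts.

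I would also note that the statement tacitly assumes $U \neq \emptyset$ (as written), so $|U| \geq 1$ and the two ranges $1 \leq k \leq |U|$ and $|U|+1 \leq k \leq |U|+r$ are both non-degenerate and together exhaust $\{1,\dots,|U|+r\}$; if one wanted to allow $U = \emptyset$ the statement would degenerate to the trivial ``$\gamma_{\times i}(G)$ computable for $i \leq r$ implies $\gamma_{\times k}(G)$ computable for $k \leq r$''. Strictly speaking one should also confirm that in the second case the $k$-tuple dominating number is finite exactly when the corresponding quantity on $G-U$ is (so that the equation of Proposition \ref{masuno2} is meaningful with the $+\infty$ convention), but this is already built into the cited proposition.

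There is no real obstacle here: the corollary is essentially a bookkeeping consequence of Lemma \ref{Umasdek} and Proposition \ref{masuno2}. The only point requiring a moment of care is making sure the index $j = k - |U|$ always lands in the interval $[1,r]$ over the stated range of $k$, and that the boundary value $k = |U|$ is assigned to the Lemma-case rather than the Proposition-case (since Proposition \ref{masuno2} requires $|U| \leq k-1$, i.e. $k \geq |U|+1$). With that alignment the proof is a two-line case analysis.
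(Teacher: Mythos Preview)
Your proposal is correct and matches the paper's approach exactly: the paper does not give an explicit proof but simply states that the corollary is ``clear from Lemma~\ref{Umasdek} and Proposition~\ref{masuno2}'', and your two-case analysis (using Lemma~\ref{Umasdek} for $k\le |U|$ and Proposition~\ref{masuno2} for $|U|+1\le k\le |U|+r$) is precisely the intended unpacking of that remark.
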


\subsection{General properties of co-biconvex graphs}




We need to remark the following: 

\begin{remark} 
If $G$ is a co-biconvex graph then $G - U$ is a  co-biconvex graph, where $U$ is the set of universal vertices of $G$.
\end{remark}

\medskip

Let $G$ be a co-biconvex graph with  its vertices  indexed so that the $0$'s occur consecutively in each column of $M^*(G)$. We assume that  the columns of  $M^*(G)$  are ordered the same way as rows (which are ordered so that the C0P holds). Let  $C_1$ be the set of columns whose  0's are below  the main diagonal, $C_2$ the set of columns whose  0's are above  the main diagonal, and $C_3$ the set of columns without 0's. See graph $G'$ in Figure \ref{fig1} for an example, where $C_1$ corresponds to columns 1 to 3, $C_2$ to columns 4 to 6 and $C_3$ to column 7 of its corresponding augmented adjacency matrix in Figure \ref{fig2}. Sets $C_1$, $C_2$ and $C_3$ partition $V(G)$, $C_3$ corresponds to the set $U$ of universal vertices of $G$ and $C_1$ and $C_2$ are cliques in $G$: if a vertex $v_i \in C_1$ is non-adjacent to a vertex $v_j$, then  the corresponding 0 in column $j$ and row $i$ has to be above the diagonal since the 0 in column $i$ and row $j$ is below~\cite{Tucker}. We denote this partition by $(C_1, C_2, U)$.  When $U=\emptyset$, it follows that $|C_1|\geq 2$ and $|C_2|\geq 2$ (otherwise, w.l.o.g. if $|C_1|=|\{v_1\}|=1$, then $v_1$ must be adjacent to at least one vertex of $C_2$  since we are dealing wtih connected graphs; but then this vertex of $C_2$ is a universal vertex which is a contradiction. The same happens if $|C_1|=|\{v_1\}|=0$ since in this case $G$ is a complete graph and the $k$-tuple domination problem is already solved for complete graphs for every $k$. When $U=\emptyset$ we simply write $(C_1, C_2)$.  Also for simplicity, we denote $G_1:=G[C_1]$ and $G_2:=G[C_2]$.

From now on, $G$ is a co-biconvex graph and $(C_1, C_2, U)$ (or $(C_1, C_2)$ when $U =\emptyset$) is the above mentioned partition of $V(G)$.

Following the notation in~\cite{Tucker},  let us denote $V(G)=\{v_1,\ldots,v_n\}$, $C_1=\{v_1,\ldots,v_r\}$ and 
$C_2=\{v_{r+1},\ldots,v_n\}$ for a given co-biconvex graph $G$ with partition $(C_1, C_2)$.
Also let us denote by $M^*_{C_iC_j}$,  the submatrix of $M^{*}(G)$ with rows indexed by $C_i$ and columns by $C_j$. Notice that $M^*_{C_1C_1}$ and $M^*_{C_2C_2}$ are both equal to matrices $J$'s of appropriate sizes.  In this case, $M^*(G)$ looks like the scheme shown in Figure \ref{esquema}.

\begin{figure}[ht]
\begin{center}
\includegraphics[scale=1.2]{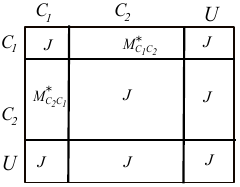}

\caption{Scheme of $M^*(G)$ for a co-biconvex graph $G$ with partition  $(C_1, C_2, U)$.}\label{esquema}
\end{center}
\end{figure}

\bigskip

It is easy to prove the following upper bound on the size of  a minimum $k$-tuple dominating set in a co-biconvex graph:

\begin{lemma} \label{contenidouniversal}
Let $G$ be a co-biconvex graph and $k$ a positive integer. If $|C_i|\geq k$  for $i=1,2$, then $$\gamma_{\times  k}(G)\leq 2k.$$
\end{lemma}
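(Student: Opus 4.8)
The plan is to exhibit an explicit $k$-tuple dominating set of cardinality $2k$ and verify it directly. Recall that in the partition $(C_1,C_2,U)$ of $V(G)$ the sets $C_1$ and $C_2$ are cliques of $G$ and $U$ consists of universal vertices; in particular $M^*_{C_1C_1}=J$ and $M^*_{C_2C_2}=J$. Using the indexing $C_1=\{v_1,\cdots,v_r\}$ with $r=|C_1|$ and $C_2=\{v_{r+1},\cdots,v_n\}$, I would take
$$D:=\{v_1,\cdots,v_k\}\cup\{v_{r+1},\cdots,v_{r+k}\}.$$
This is well defined precisely because $k\leq|C_1|=r$ and $k\leq|C_2|=n-r$ by hypothesis, and $|D|=2k$ since $C_1\cap C_2=\emptyset$ and the chosen vertices within each block are pairwise distinct.

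Next I would check that $|N_G[v]\cap D|\geq k$ for every $v\in V(G)$, splitting into the three cases of the partition. If $v\in C_1$, then since $C_1$ is a clique we have $N_G[v]\supseteq C_1\supseteq\{v_1,\cdots,v_k\}$, hence $|N_G[v]\cap D|\geq k$. If $v\in C_2$, symmetrically $N_G[v]\supseteq C_2\supseteq\{v_{r+1},\cdots,v_{r+k}\}$, so again $|N_G[v]\cap D|\geq k$. If $v\in U$, then $N_G[v]=V(G)\supseteq D$, so $|N_G[v]\cap D|=2k\geq k$. Thus $D$ is a $k$-tuple dominating set in $G$, and therefore $\gamma_{\times k}(G)\leq|D|=2k$.

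It is also worth noting, as a preliminary observation, that the hypothesis $|C_i|\geq k$ already guarantees that $\gamma_{\times k}(G)$ is finite, i.e.\ that $k\leq\delta(G)+1$: every closed neighborhood $N_G[v]$ contains $C_1$, or contains $C_2$, or equals $V(G)$, so $|N_G[v]|\geq\min\{|C_1|,|C_2|\}\geq k$ for all $v\in V(G)$. There is essentially no obstacle in this argument; the only point one must not overlook is the possible presence of universal vertices, but these are dominated trivially because the candidate set has size $2k\geq k$. One could equally well choose any $k$ vertices of each clique instead of the first $k$.
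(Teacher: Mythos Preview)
Your proof is correct and follows essentially the same approach as the paper: both exhibit a set consisting of $k$ vertices from each clique $C_1$ and $C_2$ and verify the three cases $v\in C_1$, $v\in C_2$, $v\in U$. The only cosmetic difference is that you pick the first $k$ vertices of each block explicitly (and add the side remark about $k\leq\delta(G)+1$), whereas the paper chooses arbitrary $k$-subsets $D_i\subseteq C_i$.
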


\begin{proof}
Let $D_i\subseteq C_i$  with  $|D_i|=k$, for $i=1,2$ and consider the set $D_1\cup D_2$. Take  $v\in V(G)$. If $v\in C_i$, then $D_i \subseteq  N_G[v]$, thus  $|N_G[v] \cap (D_1\cup D_2)| \geq |D_i| =k$, for $i=1$ or $i=2$. If $v\in U$, clearly $D_1 \cup D_2 \subseteq N_G[v]= V(G)$ and thus  $|N_G[v] \cap (D_1\cup D_2)| = |D_1 \cup D_2| =2k \geq k$. Then $D_1\cup D_2$ is a $k$-tuple dominating set in $G$ and the upper bound follows.\end{proof}

\bigskip

Proposition \ref{masuno2} allows us to restrict our study of  co-biconvex graphs to those  with partition $(C_1, C_2)$. In this case, it is clear that both $C_1$ and $C_2$ are non-empty sets, otherwise $G$ is complete and every vertex is universal. Since $G$ is connected, it follows that $|C_1|\geq 2$ and $|C_2|\geq 2$, since otherwise, the graph  would have universal vertices. Under these assumptions and Lemmas \ref{Umasdek} and \ref{contenidouniversal}, we have $k+1 \leq\gamma_{\times k}(G)\leq 2k$ for any co-biconvex graph $G$ without universal vertices.

\subsection{Construction of auxiliary interval graphs $H_i$}\label{aux}

Let $G$ be a co-biconvex graph without universal vertices and $(C_1, C_2)$  the above mentioned partition of $V(G)$.
We construct two interval graphs $H_1$ and $H_2$ as follows:
\begin{itemize}
\item for each vertex $v_i\in C_1$,  define an integer interval $I_i\subseteq \left[r+1,n\right]$ such that, if the  consecutive $0$'s of column $v_i$ correspond to the vertices $v_p,...,v_{p+s}$ where $p\geq r+1$ and  $p+s\leq n$, then $I_i=\left[p,p+s\right]$;

\item for each vertex $v_i \in C_2$,  define an integer interval $I_i \subseteq\left[1,r\right]$ such that, if the consecutive $0$'s of column $v_i$ correspond to the vertices $v_p,...,v_{p+s}$ with $p\geq 1$ and $p+s\leq r$, then $I_i=\left[p,p+s\right]$. 
\end{itemize}

We will consider that $v_i$ represents the interval $I_i$, for each $i=1,...,n$. 

The two interval graphs $H_1$ and $H_2$ defined  above have interval models $\mathcal{I}_1=\left\{I_1,I_2,...,I_r\right\}$ and $\mathcal{I}_2=\left\{I_{r+1},I_{r+2},...,I_n\right\}$, respectively. 

For a co-biconvex graph $G$ with partition $(C_1, C_2, U)$ and $U \neq \emptyset$, graphs $H_1$ and $H_2$ are defined as above from the subgraph $G-U$ of $G$.

An example of the above construction is shown in Figure \ref{auxiliary}:

\bigskip
\begin{figure}[ht]
\centering
\includegraphics[scale=1.2]{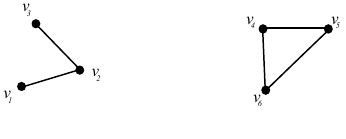}
\caption{On the left, graph $H_1$ and on the rigth, graph $H_2$, both related to graph $G'-v_7$ of Figure \ref{fig1}.}\label{auxiliary}
\end{figure}

It is clear that given two intersecting intervals $I_i$ and $I_j$ of $H_1$ for  $1\leq i \neq j \leq r$, there exists  $q$ with $r+1\leq q\leq n$ such that $m^*_{qi}=m^*_{qj}=0$. This means that $v_qv_i\notin E(G)$ and $v_qv_j\notin E(G)$. 
In other words, given two non-intersecting intervals $I_i$ and $I_j$ of $H_1$  for  $1\leq i \neq j \leq r$, we have $m^*_{qi}=1$ or $m^*_{qj}=1$  for all $q$ with $r+1\leq q\leq n$. Therefore in  each row $q$ of $M^*_{C_2C_1}$ there exists at least one  $1$-entry in the columns corresponding to vertex  $v_i$ or $v_j$, and then $v_qv_i\in E(G)$ or $v_q v_j\in E(G)$  for all $q$ with $r+1\leq q\leq n$. 

In a similar way, this argument clearly holds  for the interval graph $H_2$.

\bigskip
\subsection{Stable sets of $H_i$ and tuple-dominating sets of $G$}\label{stable}

We will denote by $\alpha_i$ the independence number of the interval graphs $H_i$ defined in the previous subsection, for $i \in \{1,2\}$. A linear-time algorithm to compute the independence number of an interval graph can be obtained for instance by a dynamic programming approach, traversing the vertices following an ordering $\{v_1, \dots, v_n\}$, of the vertices such that for all $1 \leq i < j < k \leq n$, the presence of the edge $v_iv_k$ implies the presence of the edge $v_jv_k$. Such an ordering can be  computed in linear-time \cite{olariu}.


\bigskip




When considering stable sets of $H_i$, the following interesting fact will be the key of the results in the next section: 

\begin{proposition}\label{estable} Let $G$ be a co-biconvex graph with partition $(C_1,C_2)$ and $S\subseteq {C_i}$, for some $i\in \{1, 2\}$. Then $S$ is a stable set of $H_i$ if and only if every vertex of $C_j$ has at least $|S|-1$ neighbors in $G$ belonging to $S$, for $i\neq j$.
\end{proposition}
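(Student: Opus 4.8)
The plan is to set up a precise dictionary between the interval $I_a$ attached to a vertex $v_a\in C_i$ and the set of non-neighbours of $v_a$ in $C_j$, and then to read off both implications of the equivalence from this dictionary. By the symmetry of the partition it suffices to treat $i=1$, $j=2$; recall $C_1=\{v_1,\dots,v_r\}$, $C_2=\{v_{r+1},\dots,v_n\}$ and $M^*_{C_1C_1}=J$.

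First I would prove the following identification: for every $v_a\in C_1$ and every $q$ with $r+1\le q\le n$, one has $q\in I_a$ if and only if $v_qv_a\notin E(G)$. Since $U=\emptyset$, column $v_a$ of $M^*(G)$ has at least one $0$-entry (otherwise $v_a$ would be universal); by the definition of $C_1$ all its $0$-entries lie strictly below the main diagonal; and since $M^*_{C_1C_1}=J$, none of them occurs in a row indexed by $C_1$, so all of them occur in rows indexed by $C_2$. By the C0P of $M^*(G)$ these $0$-entries occupy consecutive rows $v_p,\dots,v_{p+s}$ with $r+1\le p\le p+s\le n$, and by construction $I_a=[p,p+s]$; hence $I_a$ is exactly the set of row indices $q\in[r+1,n]$ with $m^*_{qa}=0$, which is the claimed identification. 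In particular, for distinct $v_a,v_b\in C_1$ the intervals $I_a$ and $I_b$ meet if and only if some $v_q\in C_2$ is adjacent to neither $v_a$ nor $v_b$.

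With this in hand I would run the two directions through one chain of equivalences. For $S\subseteq C_1$: $S$ is a stable set of $H_1$ $\iff$ $I_a\cap I_b=\emptyset$ for all distinct $v_a,v_b\in S$ $\iff$ no vertex of $C_2$ is non-adjacent to two distinct vertices of $S$ $\iff$ every $v_q\in C_2$ has at most one non-neighbour in $S$. Since $S\subseteq C_1$ is disjoint from $C_2$, for each $v_q\in C_2$ we have $v_q\notin S$, so $|N_G[v_q]\cap S|=|N_G(v_q)\cap S|=|S|-|\{v\in S:\ v_qv\notin E(G)\}|$; therefore $v_q$ has at most one non-neighbour in $S$ if and only if $|N_G[v_q]\cap S|\ge|S|-1$. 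Demanding this for all $v_q\in C_2$ is precisely the assertion that $S$ is a $(|S|-1)$-tuple dominating set in $G_2=G[C_2]$. Concatenating these equivalences proves the statement for $i=1$, and the case $i=2$ follows by exchanging the roles of $C_1$ and $C_2$.

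The one step that needs genuine care is the identification in the second paragraph: one has to invoke, in this order, that $U=\emptyset$ (the column carries a $0$), the definition of $C_1$ (the $0$'s sit below the diagonal), the equality $M^*_{C_1C_1}=J$ (the $0$'s avoid the rows of $C_1$), and finally the C0P (the $0$'s are consecutive and hence literally fill $I_a$). After that everything is bookkeeping; note that the ``at most one non-neighbour'' reformulation is exactly the sharp form of the observation recorded immediately after the construction of $H_1$ in Subsection~\ref{aux}.
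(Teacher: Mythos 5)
Your proof is correct and follows essentially the same route as the paper's: both translate the $(|S|-1)$-tuple domination condition into ``each row of $M^*_{C_jC_i}$ has at most one $0$ among the columns of $S$'' and identify this with pairwise disjointness of the intervals, i.e.\ stability in $H_i$. The only difference is that you spell out explicitly the identification $I_a=\{q:\ v_qv_a\notin E(G)\}$, which the paper treats as immediate from the construction in Subsection~\ref{aux}.
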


\begin{proof}
That a vertex $v$ of $G_j$ has at least $|S|-1$ neighbors in $G$ belonging to  $S$ for $i\neq j$ means that $|N_{G}[v]\cap S|\geq |S|-1$. In other words,  for each row of $M^*_{C_jC_i}$ there exists at most one zero in  the columns  corresponding to  vertices in $S$, meaning that the elements of $\{I_t\}_{t:v_t\in S}$ are pairwise non-adjacent, i.e., $S$ is a stable set of $H_i$.
\end{proof}

\subsection{The algorithm for co-biconvex graphs}

The relationship exhibited in the previous section  between tuple dominating sets of a given co-biconvex graph and stable sets of the auxiliary interval graphs $H_1$ and $H_2$ defined from it allows us to state the following general result for the $k$-tuple dominating numbers of co-biconvex graphs.

\begin{theorem}\label{alfas}
Let $G$ be a co-biconvex graph with partition $(C_1, C_2)$, interval graphs $H_i$ defined as in the previous section, and $\alpha_i$ be the independence number of $H_i$, for each $i \in \{ 1, 2 \}$. Then

\begin{enumerate}
\item if  $1 \leq i\neq j \leq2$, $\alpha_i=1$ and $D$ is a  $k$-tuple dominating set in $G$, then  $|D\cap C_j|\geq k$;
\item if $\alpha_1= \alpha_2=1$ then $\gamma_{\times k}(G)=2k$;             
\item if $\alpha_1+ \alpha_2>k$ then $\gamma_{\times k}(G)=k+1$; 
\item if $\alpha_1+ \alpha_2= k$ and $|C_i|\geq \alpha_i+1$ for $i \in \{1,2\}$ then $\gamma_{\times k}(G)=k+2$. 
\end{enumerate}
\end{theorem}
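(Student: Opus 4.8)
The plan is to translate each of the four claims into statements about stable sets of the auxiliary interval graphs $H_1$ and $H_2$, using Proposition~\ref{estable} as the bridge, and then use the bounds $k+1\leq\gamma_{\times k}(G)\leq 2k$ already established for co-biconvex graphs without universal vertices, together with Lemma~\ref{contenidouniversal} and Lemma~\ref{alosumo}.

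For part (1), suppose $\alpha_i=1$, i.e. $H_i$ has no two disjoint intervals. Let $D$ be a $k$-tuple dominating set in $G$ and write $D=(D\cap C_1)\cup(D\cap C_2)$. I would argue that $D\cap C_i$ can contribute at most one ``extra'' covering vertex to vertices of $C_j$: since $\alpha_i=1$, any subset of $C_i$ of size $\geq 2$ fails to be a stable set of $H_i$, so by Proposition~\ref{estable} it cannot be an $(|S|-1)$-tuple dominating set of $G_j$; more precisely, one shows there is a vertex $v\in C_j$ missing at least $|D\cap C_i|-1$ of the vertices of $D\cap C_i$ in its closed neighborhood. Hence to $k$-tuple dominate that $v$ we need $|N_G[v]\cap D|\geq k$, and since $v\in C_j$ sees all of $D\cap C_j$ (as $C_j$ is a clique), we get $|D\cap C_j|\geq k-1+( \text{at most }1) $; the careful bookkeeping here gives $|D\cap C_j|\geq k$. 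Part (2) then follows: $\alpha_1+\alpha_2=2$ forces $\alpha_1=\alpha_2=1$, so applying part~(1) in both directions gives $|D\cap C_1|\geq k$ and $|D\cap C_2|\geq k$, whence $\gamma_{\times k}(G)\geq 2k$; the matching upper bound is Lemma~\ref{contenidouniversal} (note $\alpha_i=1$ forces $|C_i|\geq k$, since otherwise $\delta(G)<k-1$ and no $k$-tuple dominating set exists — this sanity check needs to be inserted).

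For part (3), assume $\alpha_1+\alpha_2>k$. I would pick stable sets $S_i\subseteq C_i$ of $H_i$ with $|S_i|=\alpha_i$ and select $S_1'\subseteq S_1$, $S_2'\subseteq S_2$ with $|S_1'|+|S_2'|=k+1$ and each $|S_i'|\geq 1$ if $\alpha_i\geq 1$ (always possible since $\alpha_1+\alpha_2\geq k+1$ and each $\alpha_i\geq 1$). By Proposition~\ref{estable}, $S_i'$ is a $(|S_i'|-1)$-tuple dominating set of $G_j$, and since $S_i'$ is a clique-subset it is an $|S_i'|$-tuple dominating set of $G_i$; combining, $D=S_1'\cup S_2'$ dominates every $v\in C_j$ at least $(|S_j'|)+(|S_i'|-1)=k$ times and every $v\in C_i$ symmetrically. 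So $D$ is a $k$-tuple dominating set of size $k+1$, and with the universal lower bound $\gamma_{\times k}(G)\geq k+1$ we get equality. For part (4), assume $\alpha_1+\alpha_2=k$ and $|C_i|\geq\alpha_i+1$. The upper bound $\gamma_{\times k}(G)\leq k+2$ is obtained by the same construction as in~(3) but now taking full stable sets $S_i$ of size $\alpha_i$ plus one additional vertex from each $C_i$ (available because $|C_i|\geq\alpha_i+1$); one checks this set of size $\alpha_1+\alpha_2+2=k+2$ is $k$-tuple dominating. For the lower bound $\gamma_{\times k}(G)\geq k+2$, I would suppose for contradiction that $|D|\leq k+1$ for some $k$-tuple dominating set $D$; writing $d_i=|D\cap C_i|$, one has $d_1+d_2\leq k+1$, and for each $i$, $D\cap C_i$ — being a subset of $C_i$ of size $d_i$ — is at best an $(d_i-1)$-tuple dominating set of $G_j$ only if it is a stable set of $H_i$, which requires $d_i\leq\alpha_i$; but if $d_i\leq\alpha_i$ for both $i$ then $d_1+d_2\leq\alpha_1+\alpha_2=k$, and chasing the covering inequality for a worst-case vertex of $C_j$ shows $k\leq d_j+(d_i-1)\leq d_j+d_i-1\leq k-1$, a contradiction (the case where some $d_i>\alpha_i$, i.e. $D\cap C_i$ is not stable in $H_i$, is worse still and handled analogously by exhibiting an under-dominated vertex).

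The main obstacle will be the precise bookkeeping in parts~(1) and~(4): translating ``$S$ is not a stable set of $H_i$'' into a sharp lower bound on how badly some vertex $v\in C_j$ is under-covered, and making sure the counting $|N_G[v]\cap D| = |D\cap C_j| + |N_G[v]\cap D\cap C_i|$ is tight — in particular that the ``$-1$'' slack from Proposition~\ref{estable} cannot be exploited on both sides simultaneously without violating $|D|\leq k+1$. I would isolate this as a small combinatorial sublemma (roughly: if $S\subseteq C_i$ with $|S|=s>\alpha_i$ then some $v\in C_j$ has $|N_G[v]\cap S|\leq s-2$), prove it directly from the interval structure of $H_i$, and then feed it mechanically into each of the four cases.
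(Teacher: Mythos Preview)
Your approaches to parts (3) and (4) are essentially the paper's, and they work. The real issue is part~(1), where the argument you sketch has a genuine gap that is \emph{not} just bookkeeping.

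From $\alpha_i=1$ and Proposition~\ref{estable} you only get: any non-stable $S\subseteq C_i$ fails to $(|S|-1)$-tuple dominate $G_j$, i.e.\ some $v\in C_j$ misses \emph{at least two} vertices of $S$. Your claim that this $v$ misses at least $|D\cap C_i|-1$ of them does not follow; Proposition~\ref{estable} gives no such quantitative strengthening, and iterating it does not help because each application may return a different vertex $v$. Consequently the inequality you reach is $|D\cap C_j|\geq k-|D\cap C_i|+2$, which says nothing about $|D\cap C_j|$ in isolation. Your proposed sublemma (``if $|S|=s>\alpha_i$ then some $v$ has $|N_G[v]\cap S|\leq s-2$'') is exactly the contrapositive of Proposition~\ref{estable} and is therefore too weak for~(1); it is, however, precisely what is used in~(4), so that part is fine.

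What is missing is the Helly property for real intervals: $\alpha_i=1$ means the intervals of $H_i$ are pairwise intersecting, hence have a common point $p$; this $p$ corresponds to a vertex $v_p\in C_j$ that is non-adjacent to \emph{every} vertex of $C_i$. Then $|N_G[v_p]\cap D|=|D\cap C_j|$ and $k$-tuple domination forces $|D\cap C_j|\geq k$ immediately. This is how the paper proceeds, and it is a one-line fix once you see it. Part~(2) then follows exactly as you say; your side remark that $|C_i|\geq k$ must hold (else $\delta(G)+1<k$) is a correct sanity check the paper leaves implicit.
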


\begin{proof}
\begin{enumerate}
\item  W.l.o.g., assume $i=1$. Then  $\alpha_1=1$ implies that the vertices in $H_1$ are pairwise adjacent. Hence, the corresponding intervals are pairwise overlapping. It is known that the interval model of an interval graph fulfills the Helly property (for the definition of this property, see for example \cite{HD64}). It follows that there is a point that is part of every interval. Hence, there is a row $j$ in $M^*_{C_2C_1}$ that contains only 0's and thus
vertex $v_j\in C_2$ is non-adjacent to every vertex in $C_1$. This implies that $|D\cap C_2|\geq k$ for each  $k$-tuple dominating set  $D$ of  $G$.

\item If $\alpha_1=\alpha_2=1$ and $D$ is a $k$-tuple dominating set in  $G$, then the previous item implies that $|D\cap C_j|\geq k$, for $j=1,2$ and thus it happens that $D$ has at least $2k$ vertices.  Thus  $\gamma_{\times  k}(G) \geq 2k$.  Since $|C_j|\geq |D\cap C_j|\geq k$, for each $j=1,2$, the desired equality follows from Lemma \ref{contenidouniversal}.

\item Let $S_1$ and $S_2$ be stable sets of $H_1$ and $H_2$ respectively, with $|S_1\cup S_2|=k+1$. Proposition \ref{estable} implies that  $S_i$ is a $|S_i|$-tuple dominating set in $G_i$ and also that every vertex of $C_j$  has at least $|S_i|-1$ neighbors in $G$ belonging tio $S_i$  for each $i,j \in \{ 1, 2 \}$ and  $i\neq j$. Thus  $S_1\cup S_2$ is a $k$-tuple dominating set in $G$ and then  $\gamma_{\times k}(G) \leq k+1$. Since $\gamma_{\times k}(G) > k$, from Lemma \ref{Umasdek} we conclude that $\gamma_{\times k}(G) = k+1$. 

\item  Let $S_1$ and $S_2$ be maximum stable sets of $H_1$ and $H_2$ respectively. It is clear that $S_1\cup S_2$ is a $(\alpha_1+\alpha_2-1)$-tuple dominating set in $G$, i.e., a $(k-1)$-tuple dominating set in $G$. Since $|C_i| \geq \alpha_i  + 1 \geq |S_i|+1$  for $i = 1, 2$, there exist two vertices $w_1\in C_1-S_1$ and $w_2\in C_2-S_2$. The set $S_1\cup S_2\cup \{w_1,w_2\}$ is a $k$-tuple dominating set in $G$ with cardinality $k+2$, implying $\gamma_{\times k}(G)\leq k+2$. 

Now, since $\gamma_{\times k}(G) \geq k+1$ ($U=\emptyset$), it is enough to show that $\gamma_{\times k}(G)\neq k+1$. Suppose  $D$ is a minimum $k$-tuple dominating set in $G$ with $|D|=k+1$ and denote $D_1=D\cap C_1$, $D_2=D\cap C_2$, $d_1=|D_1|$ and $d_2=|D_2|$. 
W.l.o.g. we assume $\alpha_1 < d_1$. It follows that $D_1$ is not a stable set of $H_1$ and thus, by Proposition \ref{estable}, there exists $v \in C_2$ such that $|N_G[v] \cap D_1|\leq d_1 - 2$. Therefore it holds  $|N_G[v] \cap D|\leq d_1 - 2 + d_2 = k - 1$, contradicting the fact that $D$ is a $k$-tuple dominating set in $G$.

\end{enumerate}
\end{proof}

The results up to now allow us to completely solve the problems for the cases $k=2$ and $k=3$, as shown in the following two subsections.

\subsubsection{$2$-tuple domination}

\begin{theorem}\label{2tuple}  Let $G$ be a co-biconvex graph with  partition $(C_1, C_2, U)$, interval graphs $H_i$ defined as in the previous section, and $\alpha_i$ the independence number of $H_i$ for each $i \in \{ 1, 2 \}$. 
\begin{enumerate}
\item If $|U|=1$ then $\gamma_{\times 2}(G)=  3$.
\item If $|U|\geq 2$ then $\gamma_{\times 2}(G)=2$.
\item If $|U|=0$ and $\alpha_1+\alpha_2\geq 3$  then $\gamma_{\times 2}(G)=3$.
\item If $|U|=0$ and  $\alpha_1=\alpha_2=1$ then $\gamma_{\times 2}(G)=4$.
\end{enumerate}
\end{theorem}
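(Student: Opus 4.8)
The plan is to read off each of the four statements by specializing, to $k=2$, the machinery already assembled in the previous subsections; almost nothing new is required. Case~(2) is immediate: it is precisely Lemma~\ref{Umasdek} with $k=2$, since $\gamma_{\times 2}(G)=2$ if and only if $|U|\ge 2$. Case~(3) will follow from item~3 of Theorem~\ref{alfas}: when $U=\emptyset$ the graph carries the partition $(C_1,C_2)$, and the hypothesis $\alpha_1+\alpha_2\ge 3$ is exactly $\alpha_1+\alpha_2>k$ for $k=2$, giving $\gamma_{\times 2}(G)=k+1=3$. Case~(4) will follow from item~2 of the same theorem: $\alpha_1=\alpha_2=1$ means $\alpha_1+\alpha_2=2$, hence $\gamma_{\times 2}(G)=2k=4$, the value being finite because $|C_1|,|C_2|\ge 2$ so that Lemma~\ref{contenidouniversal} applies.

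The only case needing a short argument of its own is~(1). Since $|U|=1\le k-1$ for $k=2$, Proposition~\ref{masuno2} gives $\gamma_{\times 2}(G)=\gamma_{\times 1}(G-U)+|U|=\gamma(G-U)+1$, so it suffices to prove $\gamma(G-U)=2$. For the upper bound I would use that $G-U$ is co-biconvex with no universal vertex, hence admits a partition $(C_1,C_2)$ into non-empty cliques with $C_1\cup C_2=V(G-U)$; then any $w_1\in C_1$ together with any $w_2\in C_2$ dominates $V(G-U)$, because $w_i$ dominates the whole clique $C_i$, so $\gamma(G-U)\le 2$. For the lower bound I would observe that $G-U$ has no universal vertex at all: a vertex universal in $G-U$ is adjacent to everything in $V(G)\setminus U$ and, being adjacent to every universal vertex of $G$, also to all of $U$, hence would be universal in $G$, contradicting the fact that $U$ is \emph{all} of them. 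Lemma~\ref{Umasdek} with $k=1$ then forces $\gamma(G-U)\ge 2$, and we conclude $\gamma_{\times 2}(G)=3$.

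I do not anticipate a genuine obstacle: the theorem is essentially a corollary-style assembly, so the real work is bookkeeping — verifying that the hypotheses of the invoked results hold in each regime ($|U|\le k-1$ for Proposition~\ref{masuno2}, $U=\emptyset$ so that the partition $(C_1,C_2)$ and the numbers $\alpha_1,\alpha_2$ are defined for Theorem~\ref{alfas}, and $|C_i|\ge k$ for Lemma~\ref{contenidouniversal}) and confirming that the split is exhaustive. For $k=2$ the possible values of $|U|$ are $0$, $1$, and $\ge 2$, covered by cases~(3)--(4), (1), and (2) respectively, and when $|U|=0$ the sum $\alpha_1+\alpha_2$ is either $2$ (case~(4)) or $\ge 3$ (case~(3)) since each $\alpha_i\ge 1$; so the four items together cover every co-biconvex graph.
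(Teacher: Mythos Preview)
Your proof is correct and follows essentially the same approach as the paper: each case is dispatched by the appropriate earlier result (Lemma~\ref{Umasdek}, Proposition~\ref{masuno2}/Lemma~\ref{masuno1}, and Theorem~\ref{alfas}), with the short direct argument that $\gamma(G-U)=2$ handling case~(1). The only cosmetic difference is that for case~(4) you invoke item~2 of Theorem~\ref{alfas} ($\alpha_1+\alpha_2=2$ gives $\gamma_{\times k}(G)=2k$) whereas the paper invokes item~4 ($\alpha_1+\alpha_2=k$ with $|C_i|\ge\alpha_i+1$ gives $\gamma_{\times k}(G)=k+2$); both yield $4$ here and are equally valid.
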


\begin{proof} 
\begin{enumerate}
\item From Lemma \ref{masuno2} we have $\gamma_{\times k}(G)=\gamma_{\times (k-1)}(G-U)+1$. Since $G-U$ has no universal vertex, Lemma \ref{Umasdek} implies  $\gamma_{\times 1}(G-U)\geq 2$. The set $\{w_1,w_2\}$ is a $1$-tuple dominating set in $G-U$, where $w_1\in C_1$ and $w_2\in C_2$ are any two vertices. Thus $\gamma_{\times 1}(G-U)= 2$. 
\item  Follows from Lemma \ref{Umasdek}.
\item  Follows from Theorem \ref{alfas} item iii.
\item  Follows from Theorem \ref{alfas} item ii.
\end{enumerate}
\end{proof}

\subsubsection{$3$-tuple domination}

\begin{theorem}\label{3tuple}  Let $G$ be a co-biconvex graph with  partition $(C_1, C_2, U)$, interval graphs $H_i$ defined as in the previous section, and $\alpha_i$ the independence number of $H_i$ for each $i \in \{ 1, 2 \}$. 

\begin{enumerate}

\item If $|U|=1$, then $\gamma_{\times 3}(G)= 4$ if $\alpha_1+\alpha_2\geq 3$, and $\gamma_{\times 3}(G)= 5$ if $\alpha_1+\alpha_2=2$.
\item If $|U|=2$ then $\gamma_{\times 3}(G)=4$.
\item If $|U|\geq 3$ then $\gamma_{\times 3}(G)= 3$.
\item If $|U|=0$ and $\alpha_1+\alpha_2\geq 4 $ then $\gamma_{\times 3}(G)=4$.
\item If $|U|=0$ and $\alpha_1=\alpha_2=1$ then $\gamma_{\times 3}(G)=6$.
\item If $|U|=0$ and $\alpha_1+ \alpha_2=3$ then $\gamma_{\times 3}(G)=5$.
\end{enumerate}
\end{theorem}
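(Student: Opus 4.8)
The plan is to leverage Lemma~\ref{masuno1}, Proposition~\ref{masuno2} and Theorem~\ref{alfas} to reduce the case $|U|\geq 1$ to the case $U=\emptyset$, and then to dispose of the case $U=\emptyset$ by a direct application of the four items of Theorem~\ref{alfas} with $k=3$. For items (i)--(iii), where $|U|\geq 1$, I would iterate Lemma~\ref{masuno1} (equivalently apply Proposition~\ref{masuno2}): if $|U|\geq 3$ then $\gamma_{\times 3}(G)=3$ is immediate from Lemma~\ref{Umasdek}; if $|U|=2$ then $\gamma_{\times 3}(G)=\gamma_{\times 1}(G-U)+2$, and since $G-U$ is a co-biconvex graph with empty universal set we have $\gamma_{\times 1}(G-U)=2$ (pick one vertex from $C_1$ and one from $C_2$, exactly as in the proof of Theorem~\ref{2tuple}(i)), giving $4$; if $|U|=1$ then $\gamma_{\times 3}(G)=\gamma_{\times 2}(G-U)+1$, and here I would invoke Theorem~\ref{2tuple}(iii)--(iv) applied to $G-U$: $\gamma_{\times 2}(G-U)=3$ when $\alpha_1+\alpha_2\geq 3$ and $\gamma_{\times 2}(G-U)=4$ when $\alpha_1=\alpha_2=1$ (note $\alpha_1+\alpha_2=2$ forces $\alpha_1=\alpha_2=1$ since each $\alpha_i\geq 1$), yielding $4$ and $5$ respectively. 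Here one must be slightly careful that the $H_i$ referenced in the statement for the $|U|=1$ case are, by the construction in Subsection~\ref{aux}, exactly those built from $G-U$, so the $\alpha_i$ match up with no adjustment.

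For items (iv)--(vi), where $U=\emptyset$, I would set $k=3$ in Theorem~\ref{alfas}. Since $U=\emptyset$ we have $|C_1|\geq 2$ and $|C_2|\geq 2$, hence $\alpha_1+\alpha_2\geq 2$, and the three remaining possibilities are $\alpha_1+\alpha_2\geq 4$, $\alpha_1+\alpha_2=3$, and $\alpha_1+\alpha_2=2$. If $\alpha_1+\alpha_2\geq 4>3=k$, item (iii) of Theorem~\ref{alfas} gives $\gamma_{\times 3}(G)=k+1=4$, which is item (iv). If $\alpha_1+\alpha_2=2$, then $\alpha_1=\alpha_2=1$, so item (ii) of Theorem~\ref{alfas} gives $\gamma_{\times 3}(G)=2k=6$, which is item (v). The only case requiring a small additional verification is $\alpha_1+\alpha_2=3=k$: to apply item (iv) of Theorem~\ref{alfas} I need $|C_i|\geq \alpha_i+1$ for $i\in\{1,2\}$, which then gives $\gamma_{\times 3}(G)=k+2=5$, i.e.\ item (vi).

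The main obstacle is therefore establishing the hypothesis $|C_i|\geq \alpha_i+1$ in the subcase $\alpha_1+\alpha_2=3$. Up to relabelling, $\alpha_1=1$ and $\alpha_2=2$. For $i=1$: $|C_1|\geq 2=\alpha_1+1$ holds automatically because $U=\emptyset$. For $i=2$ I must show $|C_2|\geq 3$; suppose instead $|C_2|=2$, so that $C_2=\{v_{r+1},v_n\}$ with $r+1=n-1$. Then $H_2$ has only two intervals, both contained in $[1,r]$, and $\alpha_2=2$ forces these two intervals to be disjoint. By the observation at the end of Subsection~\ref{aux} (in the form proved there for $H_2$), two non-intersecting intervals of $H_2$ means that for every row $q$ with $1\leq q\leq r$ at least one of $m^*_{q,r+1}$, $m^*_{q,n}$ equals $1$; combined with $M^*_{C_2C_2}=J$ this says every vertex of $C_1$ is adjacent to at least one of $v_{r+1},v_n$, and $v_{r+1},v_n$ are adjacent to each other, so $\{v_{r+1},v_n\}$ is already a $1$-tuple dominating set of $G$, whence $\delta(G)+1\geq$ well-defined but more to the point the disjointness of the two intervals forces, by the C0P column structure, a row of $M^*_{C_1 C_2}$ that is all ones — i.e.\ a universal vertex in $C_1$ — contradicting $U=\emptyset$. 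I would spell out this last implication using the consecutive-$0$'s layout of the columns indexed by $C_2$ (their $0$'s sit inside rows $1,\dots,r$ and are consecutive), so that two column-supports of $0$'s that fail to overlap cannot cover all of $\{1,\dots,r\}$ unless $r=0$, again contradicting $U=\emptyset$; this pins down $|C_2|\geq 3$ and completes item (vi).
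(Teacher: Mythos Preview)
Your overall strategy coincides with the paper's own proof, which is just a one-line pointer to Lemmas~\ref{Umasdek} and~\ref{masuno1}, Proposition~\ref{masuno2}, Theorem~\ref{alfas}, and Theorem~\ref{2tuple}; your treatment of items (i)--(v) is correct and is exactly what is intended.

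There is, however, a genuine error precisely where you located the obstacle, in the verification of $|C_2|\geq\alpha_2+1$ for item~(vi). Your decisive claim---that ``two column-supports of $0$'s that fail to overlap cannot cover all of $\{1,\dots,r\}$''---is false: the integer intervals $[1,b]$ and $[b+1,r]$ are disjoint yet cover $[1,r]$. So disjointness of the two intervals of $H_2$ does \emph{not} by itself produce an all-ones row of $M^*_{C_1C_2}$, and your contradiction with $U=\emptyset$ never fires. What is missing is the hypothesis $\alpha_1=1$, which your argument never uses. The quickest repair goes through (the proof of) Theorem~\ref{alfas}(i): since $\alpha_1=1$, there exists $v_j\in C_2$ non-adjacent to every vertex of $C_1$, whence $\deg_G(v_j)=|C_2|-1$; the standing assumption $3=k\le\delta(G)+1$ then forces $|C_2|\ge 3=\alpha_2+1$, and Theorem~\ref{alfas}(iv) applies. (Alternatively: if $|C_2|=2$ and $U=\emptyset$, the two disjoint intervals of $H_2$ must actually partition $[1,r]$---otherwise some row of $M^*_{C_1C_2}$ is all ones and $C_1$ contains a universal vertex---say as $[1,b]\cup[b+1,r]$; by symmetry of $M^*$ one then reads off $I_b=\{r{+}1\}$ and $I_{b+1}=\{r{+}2\}$ in $H_1$, so $\alpha_1\ge 2$, again contradicting $\alpha_1=1$.)
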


\begin{proof}   
Similarly as in the previous theorem, the proof follows by applying accordingly the  results in previous sections. In particular, to prove item (vi) we proceed in the following manner: suppose w.l.o.g. that $\alpha_1=1$ and $\alpha_2=2$. Then, since $G$ is connected,  it follows that $|C_1|\geq 2$ since otherwise $C_1=\{v\}$ for some $v\in V(G)$ and any vertex in $C_2$ adjacent to $v$ would be a universal vertex in $G$. Also, that  $|C_2|\geq 3$ since $|C_2| \geq |C_2 \cap D|\geq 3$ when $D$ is a $3$-tuple dominating set in $G$. Applying Theorem \ref{alfas} item (iv) we arrive at $\gamma_{\times 3}(G)=5$.
\end{proof}

\begin{corollary}
 The $k$-tuple domination problem can be solved in $\mathcal{O}(n^2)$-time on a co-biconvex graph $G$ with $n$ vertices for each $2\leq k\leq |U|+3$, where $U$ is the set of universal vertices of $G$.
\end{corollary}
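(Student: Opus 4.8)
The plan is to prove the statement by a case distinction on $k$ relative to $|U|$, reducing each case to a result already available. If $2\le k\le|U|$, then $|U|\ge k$ and Lemma~\ref{Umasdek} gives $\gamma_{\times k}(G)=k$, an optimal $k$-tuple dominating set being any $k$ vertices of $U$. If $|U|+1\le k\le|U|+3$, then $|U|\le k-1$, so Proposition~\ref{masuno2} gives $\gamma_{\times k}(G)=\gamma_{\times(k-|U|)}(G-U)+|U|$ with $k-|U|\in\{1,2,3\}$; by the Remark of this section $G-U$ is co-biconvex and has no universal vertices, so it remains to solve $1$-, $2$- and $3$-tuple domination on $G-U$. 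The value $\gamma_{\times 1}(G-U)=2$, witnessed by one vertex of $C_1$ and one of $C_2$, is the one established inside the proof of Theorem~\ref{2tuple}; the values $\gamma_{\times 2}(G-U)$ and $\gamma_{\times 3}(G-U)$ are given by Theorem~\ref{2tuple} (items iii and iv, the universal-vertex set of $G-U$ being empty) and Theorem~\ref{3tuple} (items iv, v and vi). Appending the $|U|$ universal vertices of $G$ to the optimal set obtained for $G-U$ yields an optimal $k$-tuple dominating set of $G$, and all the sets involved are written down explicitly in the proofs of Theorem~\ref{alfas} and Theorem~\ref{2tuple}, as unions of stable sets of the auxiliary interval graphs $H_1$ and $H_2$ together with at most two extra vertices.

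Algorithmically I would run, and charge $O(n^2)$ to, the following pipeline. Compute $M^{*}(G)$ and, after interchanging $0$'s and $1$'s, run the Booth--Lueker C1P test: this decides the C0P for columns and returns a realizing row ordering in $O(n^2)$ time, as noted in the preliminaries. From such an ordering read off $(C_1,C_2,U)$ according to whether the $0$'s of each column lie below, above, or nowhere with respect to the main diagonal, and compute $\delta(G)$; if $k>\delta(G)+1$, return $\gamma_{\times k}(G)=+\infty$. If $2\le k\le|U|$, return $k$ vertices of $U$. Otherwise construct $H_1$ and $H_2$ for $G-U$ by reading, in each column of $M^{*}(G-U)$, the run of consecutive $0$'s; compute $\alpha_1,\alpha_2$ and a maximum stable set of each $H_i$ in linear time on the interval models~\cite{gavril}; and assemble the answer as above. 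The dominant terms are the $O(n^2)$ C0P test and the $O(n^2)$ construction of the $H_i$, while every other step costs $O(n^2)$ or less; hence the running time is $O(n^2)$ for each $k$ with $2\le k\le|U|+3$.

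I expect the only delicate point to be bookkeeping rather than a new idea. One must verify that the items of Theorems~\ref{2tuple} and~\ref{3tuple} exhaust the possible values of $\alpha_1+\alpha_2$ (they do, since $\alpha_i\ge 1$ forces that sum to equal $2$, $3$, or something at least $4$); that the hypothesis $|C_i|\ge\alpha_i+1$ demanded by the ``$\gamma_{\times k}=k+2$'' item of Theorem~\ref{alfas} is met whenever that item is invoked (it is: the only nontrivial instance needs a clique of size $\ge 3$, which follows from $\delta(G-U)\ge 2$ exactly as in the proof of Theorem~\ref{alfas}, because $\alpha_a=1$ produces a vertex of the other clique $C_b$ non-adjacent to all of $C_a$, hence of degree $|C_b|-1$); and that the degenerate cases in which $G-U$ has a vertex of degree $1$ --- which can only influence $k=|U|+3$ --- are exactly those excluded by the preliminary check, since $k=|U|+3\le\delta(G)+1$ is equivalent to $\delta(G-U)\ge 2$. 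None of these requires an argument going beyond what is already in the excerpt.
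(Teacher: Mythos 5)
Your proposal is correct and follows essentially the same route as the paper: reduce to $G-U$ via Lemma~\ref{Umasdek} and Proposition~\ref{masuno2}, obtain the consecutive-zeros ordering with the Booth--Lueker algorithm in $O(n^2)$ time, build the auxiliary interval graphs $H_1$ and $H_2$, and read off the answer from Theorems~\ref{2tuple} and~\ref{3tuple}. Your additional bookkeeping (exhausting the values of $\alpha_1+\alpha_2$, verifying the hypothesis $|C_i|\geq \alpha_i+1$ of Theorem~\ref{alfas}, and the feasibility check $k\leq\delta(G)+1$) only makes explicit what the paper's proof leaves implicit.
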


\begin{proof} Given a co-biconvex graph $G$ and $k$ fixed, if $k \leq |U|$, then follow Lemma \ref{Umasdek}. Otherwise,  follow the next scheme:
\begin{enumerate}
\item [1.] Construct the augmented adjacency matrix $M^*(G)$.
\item [2.] Apply the $\mathcal{O}(n^2)$-time algorithm in \cite{booth2} to permute accordingly rows and columns of $M^*(G)$ to ensure the structure shown in Figure \ref{esquema}, where $n=|V(G)|$.
\item  [3.] Build the interval graphs $H_1$ and $H_2$ as explained  in  Section \ref{aux} and find the independence number of $H_i$ for $i=1,2$. As already pointed out, this can be done in linear-time \cite{olariu}.
\item [4.] Apply Theorem \ref{2tuple} and Theorem \ref{3tuple}.
Following Proposition \ref{masuno2} the proof is complete.
\end{enumerate}
\end{proof}

%
%
%
%

Next we apply the previous findings to the graph $G'$ of Figure \ref{fig1}.

\begin{example}
Recall the graph $G'$ from Figure \ref{fig1} and the auxiliary graphs $H_1$ and $H_2$ of Figure \ref{auxiliary}. The results exposed in this section can be applied appropriately in order to compute the values of $\gamma_{\times i}(G')$ for each $i \in \{ 1, 2, 3,4 \}$. Figure \ref{ejemplo} shows the graph $G'-U$. Actually,  since $\alpha_1=2$ and $\alpha_2=1$, we have :

$$\gamma_{\times 4}(G')=\gamma_{\times 3}(G' - v_7) +1= 5+1=6,$$

$$\gamma_{\times 3}(G')=\gamma_{\times 2}(G' - v_7) +1= 3+1=4,$$

$$\gamma_{\times 2}(G')=3$$ and

$$\gamma_{\times 1}(G')= 1.$$

\bigskip

\begin{figure}[ht]
\centering
\includegraphics[scale=1.1 ]{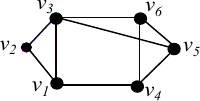}

\caption{Graph $G'- U$, where $G'$ is the graph of Figure \ref{fig1} and $U=\{v_7\}$.}\label{ejemplo}
\end{figure}
\end{example}

\section{Vertex set partition of web graphs}\label{seccion4}

Let us introduce the specific notation for the remainder of the paper.

Given  $a, b \in \mathbb{Z}$, the greatest common divisor between $a$ and $b$, i.e.,  the largest positive integer number that divides both $a$ and $b$,  is denoted by 
$gcd(a,b)$.

Given  $a, b \in \mathbb{Z}$ with $a\leq b$ and $m \in \mathbb{Z}^{+}$,  $a$ is \emph{congruent} with  $b$ modulo $m$ (denoted by $a\equiv b\;(mod\;m)$) if $b-a$ is a multiple of $m$.  For $i$ and $m$ in $\mathbb{Z}$, the set of all integer numbers that are congruent with $i$ (called the \emph{equivalence class} of $i$ modulo $m$) derived from this equivalence relation is denoted by  $\left[i\right]_m$. It is well-known that $\left\{\left[i\right]_m\right\}^{m-1}_{i=0}$ constitutes a partition of $\mathbb{Z}$, for each $m$.

\bigskip

From the definition of web graphs, it is clear that their augmented adjacency matrices satisfies the Circ1P for columns, thus web graphs are concave-round.

To simplify the notation, in the remainder  we omit the subscripts in every vertex neighborhood of a web $W_n^m$, and thus $N[v]$ will always mean $N_{W^m_n}[v]$.

\begin{center}
\begin{figure}[ht]
$\;\;\;\;\;\;\;\;\;\;\;\;\;\;\;\;\;\;\;\;\;\;\;\;$\includegraphics[scale=0.15]{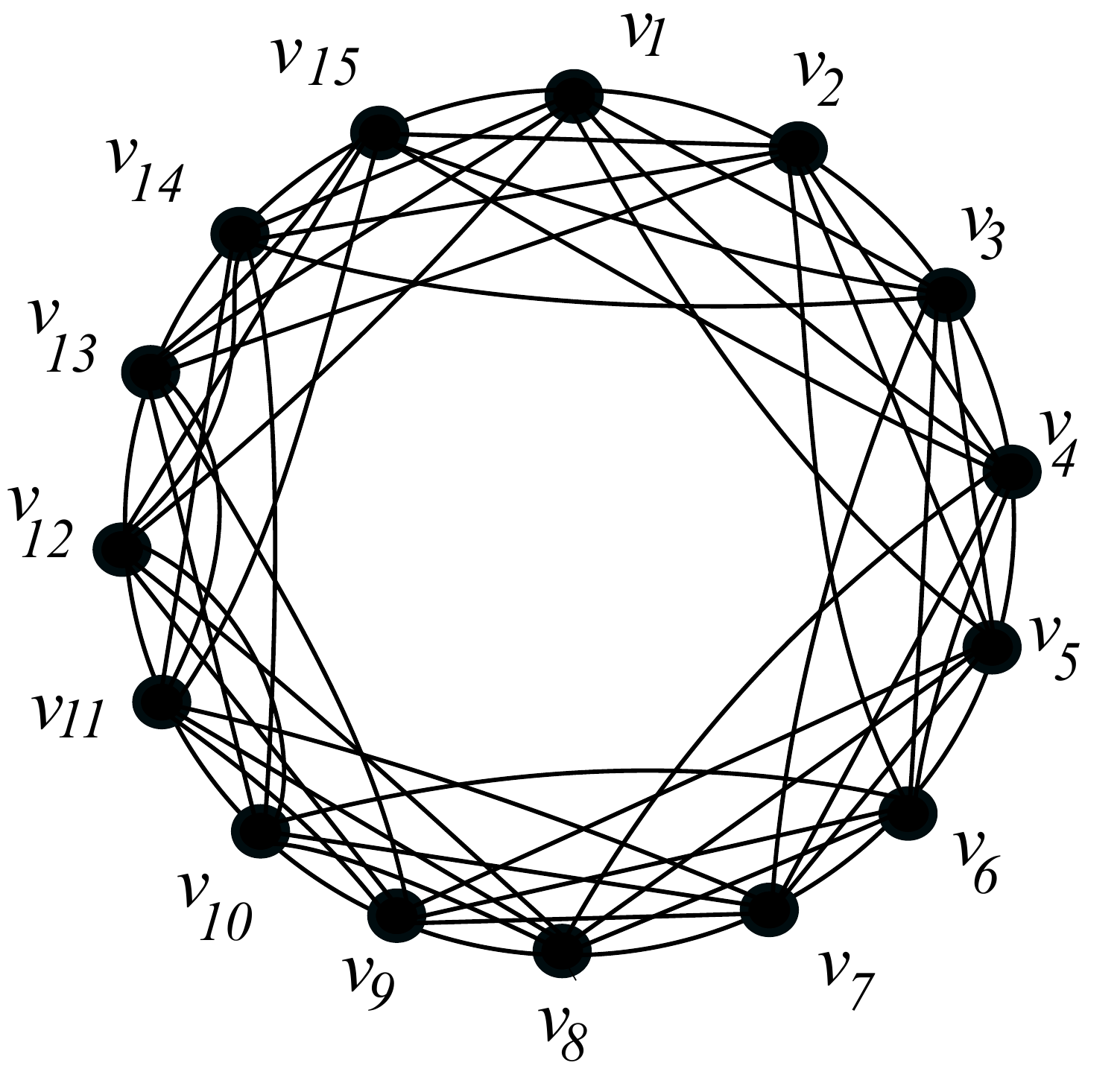}

$ \;\;\;\;\;M^*\left(W_{15}^4\right)= \left(\begin{array}{cccccccccccccccccccccccccccccc}
1&1&1&1&1&0&0&0&0&0&0&1&1&1&1\\
1&1&1&1&1&1&0&0&0&0&0&0&1&1&1\\
1&1&1&1&1&1&1&0&0&0&0&0&0&1&1\\
1&1&1&1&1&1&1&1&0&0&0&0&0&0&1\\
1&1&1&1&1&1&1&1&1&0&0&0&0&0&0\\
0&1&1&1&1&1&1&1&1&1&0&0&0&0&0\\
0&0&1&1&1&1&1&1&1&1&1&0&0&0&0\\
0&0&0&1&1&1&1&1&1&1&1&1&0&0&0\\
0&0&0&0&1&1&1&1&1&1&1&1&1&0&0\\
0&0&0&0&0&1&1&1&1&1&1&1&1&1&0\\
0&0&0&0&0&0&1&1&1&1&1&1&1&1&1\\
1&0&0&0&0&0&0&1&1&1&1&1&1&1&1\\
1&1&0&0&0&0&0&0&1&1&1&1&1&1&1\\
1&1&1&0&0&0&0&0&0&1&1&1&1&1&1\\
1&1&1&1&0&0&0&0&0&0&1&1&1&1&1\\

\end{array}\right)$
 
\caption{$W_{15}^4$ and its augmented adjacency matrix}\label{figure1}
\end{figure}

\end{center}
 
Given $n, m \in \mathbb{Z}^{+}$  with $n\geq 2m+1$, let us consider the integer division of $n$ by $2m+1$ and denote by $c$ the quotient and by $r$ the remainder,  with $0\leq r<2m+1$; i.e.,
$$n=c(2m+1)+r.$$
Also, let us denote  $\mu:=gcd\left(2m+1,r\right)$. Then, there  exist $l_1, l_2 \in \mathbb{Z}^{+}$ such that $r=l_1\mu$  and $2m+1=l_2\mu$, and then $n=\left(cl_2+l_1\right)\mu$.

\smallskip


The following lemmas are straightforward.

\begin{lemma}\label{coprimos}
$gcd\left(l_2, cl_2+l_1\right)=1$. 
\end{lemma}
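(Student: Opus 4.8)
The statement is that $\gcd(l_2, cl_2+l_1)=1$, where $2m+1=l_2\mu$, $r=l_1\mu$, $\mu=\gcd(2m+1,r)$, and $n=c(2m+1)+r$. The key observation is simply that $\gcd(l_1,l_2)=1$: dividing $2m+1$ and $r$ by their greatest common divisor $\mu$ produces two coprime integers, which is the standard fact that $\gcd(a/d,b/d)=1$ when $d=\gcd(a,b)$. I would state this as the first step.

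Next, I would use the general identity $\gcd(x,\,cx+y)=\gcd(x,y)$ for any integers $x,y$ and any integer $c$, which follows because the set of integer linear combinations of $x$ and $cx+y$ coincides with that of $x$ and $y$ (equivalently, $cx+y$ and $y$ differ by a multiple of $x$). Applying this with $x=l_2$ and $y=l_1$ gives $\gcd(l_2, cl_2+l_1)=\gcd(l_2,l_1)=1$ by the first step, which completes the argument.

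\textbf{Main obstacle.} There is essentially no obstacle here — the lemma is elementary number theory and the author even flags it as ``straightforward.'' The only thing to be careful about is making sure the two cited facts ($\gcd(a/d,b/d)=1$ and $\gcd(x,cx+y)=\gcd(x,y)$) are invoked cleanly rather than re-derived from scratch, and that the roles of $l_1,l_2,c$ match the definitions fixed just before the lemma. A one- or two-line proof suffices:

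\begin{proof}
Since $\mu=\gcd(2m+1,r)$ and $2m+1=l_2\mu$, $r=l_1\mu$, we have $\gcd(l_1,l_2)=1$. Moreover, for any integers $x,y$ and any integer $c$ one has $\gcd(x,cx+y)=\gcd(x,y)$, because $cx+y$ and $y$ differ by a multiple of $x$. Taking $x=l_2$ and $y=l_1$ yields $\gcd(l_2,cl_2+l_1)=\gcd(l_2,l_1)=1$.
\end{proof}
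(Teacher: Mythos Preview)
Your proof is correct. The paper does not supply a proof for this lemma at all---it simply declares it ``straightforward''---so your two-line argument (first $\gcd(l_1,l_2)=1$ since $\mu=\gcd(2m+1,r)$, then $\gcd(l_2,cl_2+l_1)=\gcd(l_2,l_1)$) is exactly the elementary justification the authors had in mind.
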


\begin{lemma}\label{tamanocircular}
The set $\left[i\right]_\mu\cap \left[1,n\right]$ has cardinality $\frac{n}{\mu}$ for every $i \in \{1, \ldots,\mu\}$.
\end{lemma}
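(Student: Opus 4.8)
The plan is to count the elements of $[i]_\mu \cap [1,n]$ directly. Recall that $[i]_\mu$ is the equivalence class of $i$ modulo $\mu$, i.e.\ the set of all integers congruent to $i$ modulo $\mu$. Since $\mu$ divides $n$ (we have $n = (cl_2+l_1)\mu$ by the setup preceding the lemma), the interval $[1,n]$ decomposes into exactly $n/\mu$ consecutive blocks of $\mu$ consecutive integers each, namely $[1,\mu], [\mu+1, 2\mu], \dots, [n-\mu+1, n]$. The key observation is that any block of $\mu$ consecutive integers contains exactly one representative of each residue class modulo $\mu$; in particular it contains exactly one element of $[i]_\mu$. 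First I would make this observation precise (it is just the statement that $\{1,2,\dots,\mu\}$ is a complete residue system modulo $\mu$, translated by an arbitrary integer), and then sum over the $n/\mu$ blocks to conclude that $|[i]_\mu \cap [1,n]| = n/\mu$.

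More concretely, I would write the elements of $[i]_\mu$ as $i + t\mu$ for $t \in \mathbb{Z}$, and solve $1 \le i + t\mu \le n$ for $t$: since $1 \le i \le \mu$, this gives $t$ ranging over $\frac{1-i}{\mu} \le t \le \frac{n-i}{\mu}$, and because $0 \le \frac{i-1}{\mu} < 1$ and $\frac{n}{\mu}$ is an integer with $0 < \frac{i}{\mu} \le 1$, the integer values of $t$ in this range are exactly $t = 0, 1, \dots, \frac{n}{\mu}-1$, which are $\frac{n}{\mu}$ in number. This divisibility bookkeeping is the only thing to check, and it is routine; the hypothesis $i \in \{1,\dots,\mu\}$ is used precisely to pin down the endpoints so that no off-by-one error occurs.

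I do not anticipate any real obstacle here — the statement is essentially the pigeonhole/counting fact that an interval whose length is a multiple of $\mu$ is evenly split among residue classes modulo $\mu$. The only mild subtlety is ensuring the argument uses $\mu \mid n$, which is why it is stated after the decomposition $n = (cl_2+l_1)\mu$; without that, the count could differ by one between classes. I would state the proof in the ``block'' form above since it is the most transparent and makes the role of $\mu \mid n$ explicit.
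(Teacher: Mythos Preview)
Your proposal is correct and follows essentially the same approach as the paper: both rely on the fact that $\mu \mid n$ (from $n=(cl_2+l_1)\mu$) so that the residue classes modulo $\mu$ partition $[1,n]$ evenly. The paper's proof is just the one-line observation that $n$ is a multiple of $\mu$ and that $\{[i]_\mu\cap[1,n]\}_{i=1}^{\mu}$ partitions $[1,n]$, whereas you spell out explicitly (via the block decomposition or the parametrization $i+t\mu$, $t=0,\dots,n/\mu-1$) why each part has exactly $n/\mu$ elements; your version is more detailed but not a different route.
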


\begin{proof} It follows from the facts that $n$ is a multiple of $\mu$ and $\left\{\left[i\right]_\mu \cap \left[1,n\right]\right\}_{i=1}^{\mu}$ is a partition of $\left[1,n\right]$ into sets of the same cardinality.
\end{proof}

\smallskip   
In the sequel, we will denote for each $i\in\{1, \ldots,\mu\}$, 

\begin{equation}\label{eq1}
S_i:=\left[i\right]_\mu\cap \left[1,n\right].
\end{equation}

\begin{example}\label{ejemplo1} 
For $n=15$ and $m=4$ we have $c=1$, $2m+1=9$, $r=6$ and  $\mu=gcd(9,6)=3$. Sets $S_i$'s are:
$S_1=\left[1\right]_3\cap \left[1,15\right]=\left\{1,4,7,10,13\right\}$, $S_2=\left[2\right]_3\cap \left[1,15\right]=\left\{2,5,8,11,14\right\}$ and 
$S_3=\left[3\right]_3\cap \left[1,15\right]=\left\{3,6,9,12,15\right\}$.
\end{example}

The next expression for the sets $S_i$ will lead us to define a vertex set ordering in a web graph, which will be crucial in the following section.

\begin{proposition}\label{tamanocircular2}
For each $i\in\{1, \ldots,\mu\}$ it holds

$$S_i=\bigcup_{t \in \left[0,\; {n}/{\mu-1}\right]} \left\{w\in \left[1,n\right]: w \equiv i+t(2m+1)\;(mod\;n)\right\}.$$
\end{proposition}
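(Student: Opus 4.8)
The claim is a set identity: $S_i = [i]_\mu \cap [1,n]$ equals the union over $t \in [0, n/\mu - 1]$ of the singletons (or empty sets) $\{w \in [1,n] : w \equiv i + t(2m+1) \pmod n\}$. The plan is to prove both inclusions, with the right-to-left inclusion being the easy direction and the left-to-right inclusion requiring the arithmetic that $\{ i + t(2m+1) \bmod n : t = 0, \dots, n/\mu - 1\}$ hits \emph{every} residue class mod $n$ that lies in $[i]_\mu$.

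First I would handle the inclusion "$\supseteq$". Suppose $w \in [1,n]$ satisfies $w \equiv i + t(2m+1) \pmod n$ for some $t$. Since $\mu = \gcd(2m+1, r)$ divides $2m+1$ (as $2m+1 = l_2\mu$) and $\mu$ divides $n$ (as $n = (cl_2 + l_1)\mu$), reducing the congruence modulo $\mu$ gives $w \equiv i \pmod \mu$, i.e. $w \in [i]_\mu$. Combined with $w \in [1,n]$ this yields $w \in S_i$. This direction uses only divisibility, no coprimality.

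For the inclusion "$\subseteq$", take $w \in S_i$, so $w \in [1,n]$ and $w \equiv i \pmod \mu$. I must produce a $t \in [0, n/\mu - 1]$ with $w \equiv i + t(2m+1) \pmod n$; equivalently, writing $w = i + k\mu$ for the appropriate integer $k$ (note $k$ ranges over $0, \dots, n/\mu - 1$ up to reduction mod $n/\mu$, since $w$ runs through all $n/\mu$ elements of $S_i$ by Lemma~\ref{tamanocircular}), I need $t(2m+1) \equiv k\mu \pmod n$, i.e. dividing through by $\mu$, $t \, l_2 \equiv k \pmod{n/\mu}$ where $n/\mu = cl_2 + l_1$. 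By Lemma~\ref{coprimos}, $\gcd(l_2, cl_2 + l_1) = 1$, so $l_2$ is invertible modulo $n/\mu$ and the congruence has a unique solution $t$ modulo $n/\mu$; choosing the representative in $[0, n/\mu - 1]$ gives the required $t$. Thus $w$ lies in the $t$-th set of the union. To finish cleanly I would also observe that the map $t \mapsto (i + t(2m+1) \bmod n)$ restricted to $t \in [0, n/\mu - 1]$ is injective onto a subset of $[i]_\mu \cap [1,n] = S_i$ which already has exactly $n/\mu$ elements, so in fact each term of the union is a singleton and the union is a disjoint one — but this refinement is not strictly needed for the stated equality.

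The main obstacle is the left-to-right inclusion, specifically the bookkeeping that ties the parameter $k$ in $w = i + k\mu$ to the range $[0, n/\mu - 1]$ and the invocation of the invertibility of $l_2$ modulo $n/\mu$; everything else is routine divisibility. One should be slightly careful that the congruence $w \equiv i + t(2m+1) \pmod n$ is taken modulo $n$ (not modulo $2m+1$), so the reduction step divides the modulus $n = (cl_2+l_1)\mu$ by $\mu$ to get $n/\mu = cl_2 + l_1$, which is exactly the quantity Lemma~\ref{coprimos} pairs with $l_2$.
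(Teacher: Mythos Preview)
Your proof is correct and follows essentially the same approach as the paper: the inclusion $\supseteq$ is handled identically via divisibility by $\mu$, and the reverse inclusion hinges on Lemma~\ref{coprimos} in both arguments. The only organizational difference is that the paper proves $\subseteq$ by a cardinality argument (showing the $n/\mu$ terms of the union are pairwise distinct and then invoking Lemma~\ref{tamanocircular}), whereas you argue surjectivity directly by inverting $l_2$ modulo $n/\mu = cl_2+l_1$ --- two dual uses of the same coprimality, which you yourself note at the end of your proposal.
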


\begin{proof} 

It is straightforward that for $i\in\{1, \ldots,\mu\}$,  $$\bigcup_{t \in [0,{n}/{\mu}-1]} \left\{w\in \left[1,n\right]: w \equiv i+t(2m+1)\;(mod\;n)\right\} \subseteq S_i.$$  From Lemma \ref{tamanocircular}, it is sufficient to prove that for each $i\in\{1, \ldots,\mu\}$, it holds
$$\left|\bigcup_{t \in \left[0,{n}/{\mu}-1\right]} \left\{w\in \left[1,n\right]: w \equiv i+t(2m+1)\;(mod\;n)\right\}\right|=\frac{n}{\mu}.$$

Let $i\in \{1, \ldots,\mu\}$ and $0\leq t_1<t_2\leq \frac{n}{\mu}-1$. If $i+t_1(2m+1)\equiv
i+t_2(2m+1)\;(mod \;n)$, then there exists $s\in\mathbb{Z}^{+}$ such that
$\left(t_2-t_1\right)(2m+1)=sn$.
Recall that $r=l_1\mu$ and  $2m+1=l_2\mu$, for $l_1, l_2\in \mathbb{Z}^+$, and then $n=\left(cl_2+l_1\right)\mu$.
Thus
$\left(t_2-t_1\right)l_2\mu=s\left(cl_2+l_1\right)\mu$, implying that 
$s\left(cl_2+l_1\right)$ is a multiple of $l_2$.
From Lemma \ref{coprimos}, it follows that  $s$ is a multiple of $l_2$, i.e., $s=\alpha l_2$
for some  $\alpha \in \mathbb{Z}^{+}$.
Therefore $t_2-t_1=\alpha \frac{n}{\mu}\geq \frac{n}{\mu} $, a contradiction.
\end{proof}

Let us  apply  the above result to the vertex set of a web graph.
From now on, all sums are taken modulo $n$.

Consider a web graph $W^m_n$, for some $m$ and $n$. From the definition of a web graph, it is clear that  the neighborhood of vertex $v_{m+j}$ is $$N[v_{m+j}]=\left\{v_j,v_{j+1},\ldots,v_{j+2m}\right\},$$ where all subscripts belong to $\left[1,n\right]$.

For simplicity and w.l.o.g., in the sequel we will refer to vertex $v_{m+j}$  as $j$, for each $j\in\left\{1,\ldots,n\right\}$.

In this way, as a corollary of Lemma \ref{tamanocircular} and Proposition \ref{tamanocircular2},  we can state:

\begin{corollary}\label{particion}
$\{S_i\}^\mu_{i=1}$ is a partition of  $V(W_n^m)$ into sets of cardinality $\frac{n}{\mu}$.
\end{corollary}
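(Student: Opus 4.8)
\textbf{Plan of proof for Corollary \ref{particion}.}
The plan is to combine the two preceding facts essentially verbatim, together with the relabelling of the vertex set that was just introduced. First I would observe that $V(W_n^m)=\{1,\cdots,n\}$ under the convention that vertex $v_{m+j}$ is called $j$; since the map $j\mapsto m+j$ (subscripts mod $n$) is a bijection of $\{1,\cdots,n\}$ onto itself, this relabelling is harmless and $V(W_n^m)$ is in bijection with the index interval $[1,n]$. Then the sets $S_i=[i]_\mu\cap[1,n]$, $i\in\{1,\cdots,\mu\}$, are exactly the subsets of $[1,n]$ determined by residue mod $\mu$.

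Next I would invoke the standard fact, recalled in the excerpt just before the definition of the $S_i$'s, that $\{[i]_\mu\}_{i=0}^{\mu-1}$ partitions $\mathbb{Z}$; intersecting with $[1,n]$ and using that $n$ is a multiple of $\mu$ (so that the residue classes $1,\cdots,\mu$ are exactly the ones meeting $[1,n]$, each class contributing the same count), one gets that $\{S_i\}_{i=1}^{\mu}$ is a partition of $[1,n]$. This is precisely what is stated in the proof of Lemma \ref{tamanocircular}. Hence, transporting through the vertex relabelling, $\{S_i\}_{i=1}^{\mu}$ is a partition of $V(W_n^m)$.

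Finally, the cardinality claim is immediate from Lemma \ref{tamanocircular}, which already asserts $|S_i|=\frac{n}{\mu}$ for every $i\in\{1,\cdots,\mu\}$; alternatively it follows from Proposition \ref{tamanocircular2}, whose proof explicitly exhibits $S_i$ as a union of $\frac{n}{\mu}$ distinct residues mod $n$. Putting the partition statement and the cardinality statement together yields the corollary.

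\textbf{Main obstacle.} There is essentially no obstacle: the corollary is a bookkeeping consequence of Lemma \ref{tamanocircular} and Proposition \ref{tamanocircular2}, and the only point requiring a word of care is checking that the identification of $v_{m+j}$ with $j$ really is a bijection $V(W_n^m)\to[1,n]$ (it is, since addition of the fixed shift $m$ modulo $n$ permutes the residues), so that a partition of $[1,n]$ pulls back to a partition of $V(W_n^m)$ of the same block sizes. I would simply note this and cite the two earlier results.
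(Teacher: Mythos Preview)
Your proposal is correct and mirrors the paper's own reasoning: the corollary is stated there without a separate proof, simply as an immediate consequence of Lemma~\ref{tamanocircular} (partition of $[1,n]$ into residue classes of size $n/\mu$) and Proposition~\ref{tamanocircular2}, transported to $V(W_n^m)$ via the relabelling $v_{m+j}\leftrightarrow j$. Your only extra care---verifying that this shift is a bijection---is appropriate and in line with the paper's intent.
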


\section{ $k$-tuple domination on  web graphs}\label{seccion5}

In this section we derive an algorithm that computes $\gamma_{\times k}(W_n^{m})$ for a web graph $W_n^{m}$ and each $k$. 

Firstly, we recall a result in~\cite{AEU} where both an upper bound and a lower bound for   $\gamma_{\times k}(W_n^m)$  for every $k$ were given:  $$k\left\lfloor \frac{n}{2m+1}\right\rfloor\leq\gamma_{\times k}(W_n^{m})\leq k \left\lceil \frac{n}{2m+1}\right\rceil.$$ 

Then we note that the sets $S_i$'s defined in Section \ref{seccion4} play a crucial role when considering tuple dominating sets of a web graph. An example is the scheme shown in Figure \ref{figure3}.

\begin{lemma}\label{l_2domino}
Let $W^m_n$ be a web graph. Then for every vertex $v\in V(W^m_n)$ and  $i\in\left\{1,\ldots,
\mu\right\}$, $\left|N\left[v\right]\cap S_i\right|=l_2$, where $2m+1=l_2\mu$ for $l_2 \in \mathbb{Z}^+$.
\end{lemma}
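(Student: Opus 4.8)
The plan is to show that $N[v]\cap S_i$ has exactly $l_2$ elements for every $v$ and every residue class $i$. First I would use the simplification already set up in the excerpt: identifying $v_{m+j}$ with $j$, the closed neighborhood of vertex $j$ is the block of $2m+1$ consecutive integers $\{j,j+1,\dots,j+2m\}$ taken modulo $n$. So the claim reduces to a purely number-theoretic statement: for any window of $2m+1$ consecutive residues modulo $n$, exactly $l_2=\frac{2m+1}{\mu}$ of them lie in the class $[i]_\mu\cap[1,n]=S_i$.

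Next I would argue as follows. Consider the $2m+1$ consecutive integers forming $N[j]$. Reducing these modulo $\mu$, I claim they cycle through the residues $0,1,\dots,\mu-1$ periodically, and since $2m+1=l_2\mu$ is an exact multiple of $\mu$, each residue class modulo $\mu$ is hit exactly $l_2$ times within this window. The only subtlety is that the window is taken modulo $n$, not as a genuine interval of integers — but here I would invoke that $n=(cl_2+l_1)\mu$ is itself a multiple of $\mu$ (established in Section~\ref{seccion4}), so reduction modulo $n$ followed by reduction modulo $\mu$ is consistent: wrapping around $n$ does not disturb the residue modulo $\mu$. Hence the count of elements of $N[j]$ in each class $[i]_\mu$ is exactly $l_2$, and intersecting with $[1,n]$ changes nothing because $N[j]$ already consists of elements represented in $[1,n]$, which is precisely how $S_i$ is defined in~\eqref{eq1}.

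Alternatively, and perhaps more cleanly, I would lean on Proposition~\ref{tamanocircular2}, which expresses $S_i$ as the union over $t\in[0,\,n/\mu-1]$ of the singleton residues $i+t(2m+1)\pmod n$. Fix $v$; its neighborhood is a window $W$ of length $2m+1$. For each $t$ the element $w_t\equiv i+t(2m+1)\pmod n$ either lies in $W$ or not; since consecutive $w_t$'s differ by $2m+1$, which is exactly the length of $W$, the points $\{w_t\}_{t}$ are equally spaced around the cycle $\mathbb{Z}_n$ with gap $2m+1$, and $W$ has length $2m+1$, so $W$ contains exactly $\frac{2m+1}{\,\text{(spacing)}\,}\cdot\frac{|S_i|\cdot\mu}{n}$... more simply: $W$ contains exactly $l_2$ of the $n/\mu$ equally spaced points $w_t$, because $n/\mu$ points with common gap $2m+1$ wrap around the circle of circumference $n=(n/\mu)(2m+1)/l_2$, so in any arc of length $2m+1$ there are exactly $l_2$ of them. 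Summing over the $\mu$ classes recovers $\mu l_2=2m+1=|W|$, which is a useful sanity check.

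The main obstacle I anticipate is handling the modular wrap-around rigorously: one must be careful that "consecutive integers modulo $n$" behave well under further reduction modulo $\mu$, and that the equally-spaced points $w_t$ genuinely partition into arcs of uniform size so that a length-$(2m+1)$ window always catches the same number $l_2$ of them regardless of where the window starts. This is exactly the place where the divisibility relations $\mu\mid n$ and $\mu\mid(2m+1)$, together with Lemma~\ref{coprimos}, do the real work; once those are invoked the counting is immediate. I would therefore structure the proof as: (1) reduce to the window description of $N[v]$; (2) observe $\mu\mid n$ so residues mod $\mu$ are well defined on $\mathbb{Z}_n$; (3) count residues in a length-$(2m+1)$ window, getting $l_2$ per class; (4) match this with $S_i$ via~\eqref{eq1} or Proposition~\ref{tamanocircular2}.
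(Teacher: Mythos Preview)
Your first approach is correct and is essentially the paper's own argument: the paper simply notes that $N[v]$ is an interval of length $2m+1=l_2\mu$, so by Lemma~\ref{tamanocircular} it contains exactly $l_2$ elements of each residue class $S_i$. Your added care about the modular wrap-around (using $\mu\mid n$ so that reduction mod $n$ is compatible with reduction mod $\mu$) is a worthwhile clarification that the paper leaves implicit; the alternative route via Proposition~\ref{tamanocircular2} is unnecessary here and, as you noticed yourself, less clean.
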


\begin{proof} 
Let $i\in\left\{1,\ldots, \mu\right\}$.
From Lemma \ref{tamanocircular} and since $2m+1=l_2\mu$, in every interval of length $2m+1$, there are exactly $l_2$ elements of $S_i$, for each $i$. In particular this holds for $N[v]$ for every vertex $v$, and the result follows.\end{proof}

\begin{figure}[ht]
\centering
\includegraphics[scale=0.15]{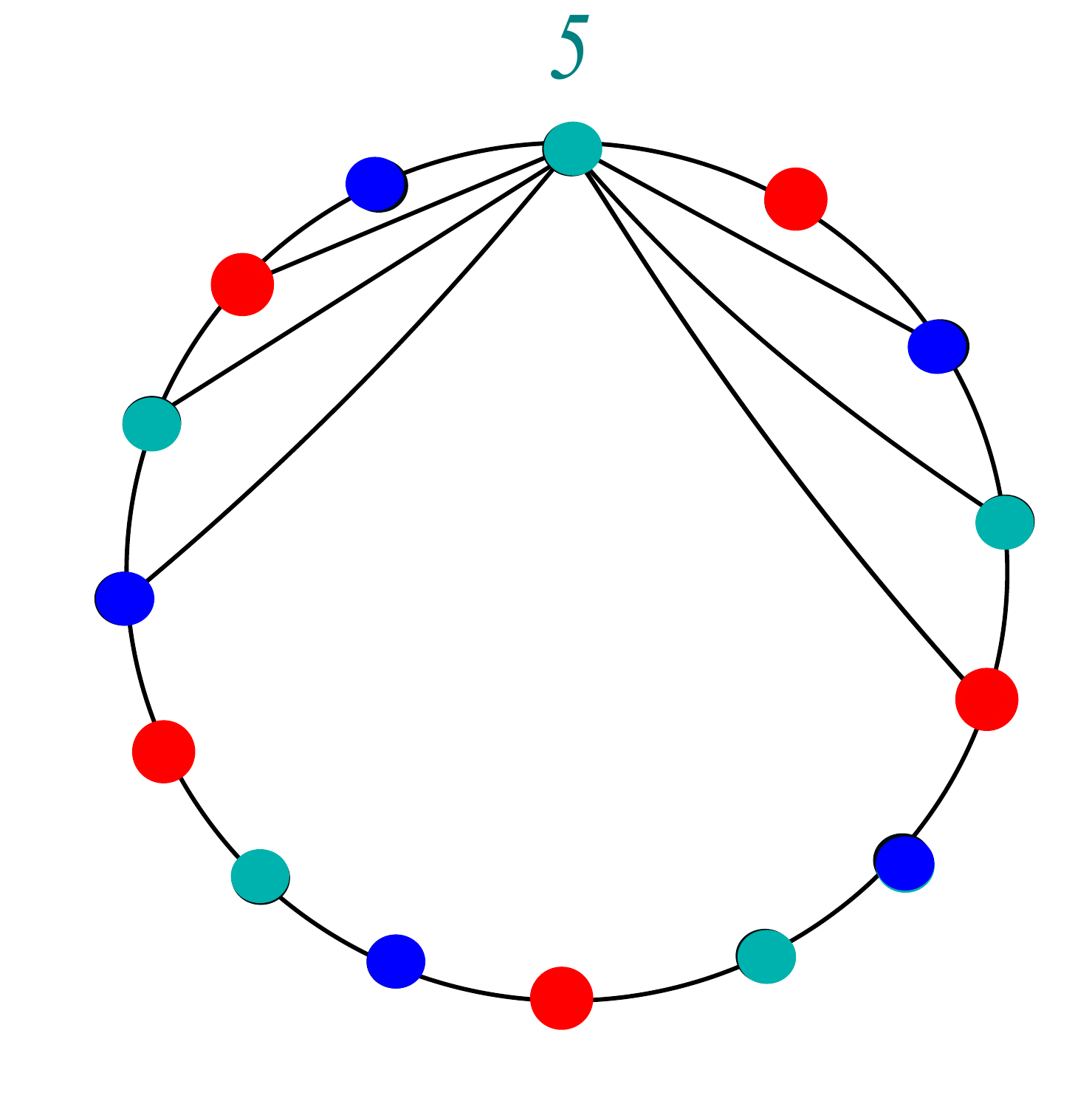}
\caption{$N[5]$ has $l_2=3$ vertices of each $S_i$, referring to the web graph $W^4_{15}$ of Example \ref{ejemplo1}.}\label{figure3}
\end{figure}
\begin{proposition}\label{kigualal2}

For every web graph $W_n^m$,
$$\gamma_{\times l_2}(W_n^m)=\frac{n}{\mu}.$$
\end{proposition}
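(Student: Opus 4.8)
The claim is that $\gamma_{\times l_2}(W_n^m) = n/\mu$, where $2m+1 = l_2\mu$ and $\mu = \gcd(2m+1, r)$. The plan is to prove this by establishing matching upper and lower bounds.

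\textbf{Upper bound.} First I would exhibit an explicit $l_2$-tuple dominating set of size $n/\mu$. The natural candidate is $S_i$ itself for any fixed $i \in \{1, \dots, \mu\}$: by Corollary~\ref{particion}, $|S_i| = n/\mu$, and by Lemma~\ref{l_2domino}, $|N[v] \cap S_i| = l_2$ for every vertex $v \in V(W_n^m)$. Hence $S_i$ is an $l_2$-tuple dominating set, giving $\gamma_{\times l_2}(W_n^m) \le n/\mu$ immediately. (One should double-check that $l_2 \le \delta(W_n^m) + 1 = 2m+1$ so that $l_2$-tuple dominating sets exist at all, but this is clear since $l_2 \mid 2m+1$.)

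\textbf{Lower bound.} This is the main obstacle. I need $\gamma_{\times l_2}(W_n^m) \ge n/\mu$. One route is the known bound from~\cite{AEU}: $\gamma_{\times k}(W_n^m) \ge k\lfloor n/(2m+1)\rfloor$, which for $k = l_2$ gives $l_2 \lfloor n/(2m+1)\rfloor = l_2 c$ (since $n = c(2m+1) + r$ with $0 \le r < 2m+1$ forces $\lfloor n/(2m+1)\rfloor = c$). But $l_2 c = n/\mu$ only when $r = 0$; in general $n/\mu = cl_2 + l_1 > cl_2$ when $r > 0$, so the $\cite{AEU}$ bound is not tight enough. Instead I would argue directly via a counting/averaging argument over the partition $\{S_i\}_{i=1}^\mu$. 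Let $D$ be any $l_2$-tuple dominating set. Summing the domination inequality $|N[v] \cap D| \ge l_2$ over all $v \in V(W_n^m)$ and switching the order of summation gives $\sum_{u \in D} |N[u]| \ge l_2 n$, i.e. $|D|(2m+1) \ge l_2 n$, hence $|D| \ge l_2 n/(2m+1) = n/\mu$ exactly (using $2m+1 = l_2\mu$). This double-counting is clean because $W_n^m$ is regular of closed-degree $2m+1$, so no rounding loss occurs.

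\textbf{Assembly.} Combining $\gamma_{\times l_2}(W_n^m) \le n/\mu$ (from $S_i$) with $\gamma_{\times l_2}(W_n^m) \ge n/\mu$ (from the double-counting) yields the equality. I expect the only subtlety is making sure the averaging argument is stated with closed neighborhoods and that $|N[u] \cap \{v : v \in V\}|$ sums correctly — this is routine given regularity. The elegance here is that the partition into the $S_i$'s is not even strictly needed for the lower bound, but it is what produces the tight dominating set for the upper bound, so both halves are short.
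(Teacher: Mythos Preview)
Your proposal is correct and matches the paper's proof essentially line for line: the upper bound via $S_i$ and Lemma~\ref{l_2domino}, and the lower bound via the double-counting identity $\sum_{v}|N[v]\cap D|=\sum_{u\in D}|N[u]|=|D|(2m+1)\ge l_2 n$. The only cosmetic difference is that the paper phrases the lower bound as a contradiction (assuming $|D|\le n/\mu-1$ and deriving a vertex with $|N[v]\cap D|<l_2$), whereas you state the inequality directly.
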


\begin{proof}
 Lemma \ref{l_2domino} implies that each $S_i$ is an $l_2$-tuple dominating set in $W_n^m$ and thus $\gamma_{\times l_2}(W_n^m)\leq\frac{n}{\mu}$.

If $\gamma_{\times l_2}(W_n^m)<\frac{n}{\mu}$, then there exists an $l_2$-tuple dominating set $D$  of $W_n^m$  with $\left|D\right|\leq \frac{n}{\mu}-1$. Therefore

$$ \sum_{v\in V(W_n^m)}|N[v]\cap D|=\sum_{v\in D}\left| N[v]\right|=\left|D\right|(2m+1)\leq nl_2-(2m+1)<n l_2.$$ 
It follows that there exists a  vertex $v\in V(W_n^m)$ such that $|N[v]\cap D|<l_2$ leading to a contradiction since $D$ is an  $l_2$-tuple dominatig set of $W_n^m$.\end{proof}

\bigskip

Following Proposition \ref{tamanocircular2}, we consider an ordering in each set  $S_i$ starting from $i$,  in such a way that each element is obtained from the previous one in this ordering as a ``circular movement'' of $2m+1$ positions. 
This is formalized in the following definition:

\begin{definition}\label{contiguos}
Given a web graph $W_n^{m}$, $i\in \{1,\ldots \mu\}$ and  $j, q$ two vertices in $S_i$,  we say that $q$ is \emph{$1$-contiguous} to $j$ when 
$q\equiv j+2m+1$ (mod $n$).
\end{definition}

\begin{remark} Recalling that $c$ indicates the quotient in  the integer division of $n$ by $2m+1$ and that $c=1$ if and only if $2m+1 > \frac{n}{2}$, when $q$ is $1$-contiguous to $j$ it holds

$$\left|N[j]\cup N[q]\right|= \left\{
\begin{array}{ccc}
n   & \; if \; & c=1 \\
2(2m+1) & \; if \; & c\geq 2.
\end{array} \right.$$

\end{remark}

Given a web graph and $i\in \{1, \ldots,\mu\}$, let us consider in $S_i$, the ordering induced by the $1$-contiguous relation introduced in  Definition \ref{contiguos}. To indicate and list the elements of $S_i$ following that ordering, we write $$\left\langle S_i\right\rangle=(s^i_1,\ldots, s^i_{\frac{n}{\mu}}).$$

\begin{example}\label{ejemplo2}
For $W_{15}^4$ (see Example \ref{ejemplo1}), we can write 
$\left\langle S_1\right\rangle=( 1, 10, 4, 13, 7 )$, $\left\langle S_2\right\rangle= (2, 11, 5, 14, 8 )$ and $\left\langle S_3\right\rangle= (3, 12, 6, 15, 9)$.
\end{example}

\begin{remark} Any subset of $c+1$ consecutive vertices from any $\left\langle S_i\right\rangle$  is a  $1$-tuple dominating set in $W_n^m$. 
\end{remark}

Given $m$, $n$, $\langle S_i \rangle$ for some $i$ and  $\alpha \leq l_2$, procedure \textbf{DOM} described below returns as output an $\alpha$-tuple dominating set in the web graph  $W^m_n$. 
In each step, it chooses from $\langle S_i \rangle$, a vertex that is  the $1$-contiguous vertex to the latest vertex added to $D$ and stops when each vertex of $W^m_n$ has at least $\alpha$ vertices in its closed neighborhood belonging to $D$.
We take into account Procedure \textbf{DIV}($t$, $w$) which returns the quotient  and the remainder  from the Integer Division between two given integer numbers $t$ and $w$. Also, Procedure \textbf{PROC}($n$,$m$,$i$) ---based on Proposition \ref{tamanocircular2}--- which returns the set  $\langle S_i \rangle$.

\medskip

\begin{algorithmic}
\Procedure{\textbf{DOM}}{$n$, $m$, $\langle S_i \rangle$, $\alpha$}
		\State $t=0$
		\State $h=1$
				\State \textbf {DIV} ($ n $, $ 2m+1 $)  and print remainder $r$
		\State $\mu=gcd(2m+1,r)$
				\State $D=\emptyset$
     \While {$h\leq \alpha$}
			\State $j=t+1$
	  		\While {$s^i_j+2m<n$ and $j\leq \frac{n}{\mu}$}
      \State $D= D\cup \{s^i_j\}$ 
      \State $j=j+1$
   			  \EndWhile
    \State $D= D\cup \{ s_j^i \}$ 
			\State $h=h+1$
			\State $t=j$ 
   	\EndWhile
\EndProcedure
\end{algorithmic}

\bigskip

\begin{example}
For the web graph $W_{15}^4$, where $\left\langle S_1\right\rangle=( 1, 10, 4,13, 7)$, if we run \textbf{procedure} {\textbf{DOM}} $\left(15,4 , \langle S_1 \rangle, 3\right)$ we obtain:

\begin{center}
\begin{tabular}{|c|c|c|}
\hline
 \textbf{Iteration} $h$  & \textbf{Se}t $D$ & $D$ \textbf{ correponds to }\\
\hline
1& $D:=\left\{1,10\right\}$ & a 1-tuple dominating  set\\
\hline
 2& $D:=D\cup \left\{4,13\right\}$& a   2-tuple dominating set\\
\hline
3& $D:=D\cup \left\{7\right\}$&  a 3-tuple dominating set\\
\hline
\end{tabular}
\end{center}
\end{example}

\bigskip

Notice that, when \textbf{DOM} ends, every vertex of the input graph is dominated by at least $k$ vertices of the output $D$, and moreover, $r_k$ vertices are possibly dominated by at least $k+1$ vertices of the set $D$, where  $1\leq r_k\leq 2m$. 

We are ready to build Algorithm 1 and from it, to prove optimality. 
We assume that the web graph in the input is given with the circular ordering introduced in Definition \ref{contiguos}.  When this is not the case  and the graph is only given through its augmented adjacency matrix, such an ordering can be computed  in such a way that, starting from any vertex, each element in the ordering is obtained from the previous one in this ordering as a ``shift'' of $2m+1$ positions.

\begin{algorithm}[ht] 
\caption{Minimum $k$-tuple dominating set (fixed $k$) of $W^m_n$}
\label{algrdom2}
\begin{algorithmic}[1]
\Require $n\in \mathbb{Z}^{+}$, $m,k\in \mathbb{Z}^{+}$ with $n \geq 2m+1\geq k$.
\Ensure A minimum $k$-tuple dominating set $D$ of $W^m_n$. 
\State \textbf{DIV}$(n, 2m+1)$ and print quotient $c$ and remainder $r$
\State  $\mu:=gcd(2m+1,r )$ 
\State   \textbf{DIV}$(2m+1, \mu)$ and print quotient $l_2$
\State   \textbf{PROC}($n$,$m$,1) and print $\langle S_1 \rangle$
	    \If {$k\leq l_2$}  $D=$ \textbf{DOM}($n$, $m$, $\langle S_1 \rangle$, $k$)	
 		\Else { ($k>l_2$)}
					\State \textbf{DIV}$(k, l_2)$  and print quotient $\tilde{c}$ and remainder $\tilde{r}$
					\State $D= \textbf{DOM} (n, m, \langle S_{1} \rangle, \tilde{r}) \cup \textbf{PROC}(n,m,2)\cup\ldots \cup \textbf{PROC}(n,m,\tilde{c}+1)$
    	\EndIf
\end{algorithmic}
\end{algorithm}

\bigskip

\begin{proposition}\label{kdomset}
For a web graph $W_n^m$ and a positive integer $k$ with $k \leq 2m+1$, the set $D$ returned by Algorithm 1 is a $k$-tuple dominating set of  $W_n^m$.
\end{proposition}

\begin{proof}
Take $v\in V( W_n^m)$. 

When $k \leq l_2$, the set returned by Algorithm 1 in step 5  is a $k$-tuple dominating set $D$  since it is the output of  \textbf{DOM}($n$, $m$, $\langle S_1 \rangle$, $k$).

When $k > l_2$, we claim that the set $D$ returned by Algorithm 1 in step 8  is a  $k$-tuple dominating set of  $W_n^m$. By applying Procedure $\textbf{DIV}(k, l_2)$ which returns $\tilde{c}$ and $\tilde{r}$ and Lemma \ref{l_2domino},  we have:

$$\left|N[v]\cap D\right| = \left|N[v] \cap [ \bigcup\limits_{i=2}^{\widetilde{c}+1}S_i \cup  \textbf{DOM} (n, m, \langle S_1 \rangle, \widetilde{r})]\right|=$$

$$\left|N[v] \cap \bigcup\limits_{i=2}^{\widetilde{c}+1}S_i\right|  + \left|N[v] \cap  \textbf{DOM} (n, m, \langle S_1 \rangle, \widetilde{r})\right|= $$

$$\sum\limits_{i=2}^{\widetilde{c}+1} \left|N[v]  \cap S_i \right|  + \left|N[v] \cap  \textbf{DOM} (n, m, \langle S_1 \rangle, \widetilde{r})\right| \geq $$
$$ \tilde{c} l_2 + \tilde{r}= k,$$
and the claim follows.
\end{proof}

\begin{theorem}\label{kdominacion}

For a web graph $W_n^m$ and a positive integer $k$ with $k \leq 2m+1$, the $k$-tuple dominating set of  $W_n^m$ returned by Algorithm 1  has  size 
$$\gamma_{\times k}(W_n^{m})=\left\lceil
\frac{kn}{2m+1}\right\rceil.$$
\end{theorem}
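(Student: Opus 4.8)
The plan is to prove the two inequalities $\gamma_{\times k}(W_n^m)\le \lceil kn/(2m+1)\rceil$ and $\gamma_{\times k}(W_n^m)\ge \lceil kn/(2m+1)\rceil$ separately, using the machinery already developed: the partition $\{S_i\}_{i=1}^{\mu}$ of $V(W_n^m)$ into $\mu$ sets of size $n/\mu$ each dominating every vertex exactly $l_2$ times (Lemma~\ref{l_2domino}), and Proposition~\ref{kigualal2} which handles the ``exact'' case $k=l_2$. The key arithmetic identity to keep in mind is $n=(cl_2+l_1)\mu$, so $n/\mu=cl_2+l_1$ and $2m+1=l_2\mu$; hence $\lceil kn/(2m+1)\rceil=\lceil k(cl_2+l_1)/l_2\rceil = k c + \lceil kl_1/l_2\rceil$, and $kn/(2m+1)$ is an integer precisely when $l_2\mid kl_1$, which (since $\gcd(l_1,l_2)$ divides $\gcd(r,2m+1)/\mu$-type considerations, really since $\gcd(l_1,l_2)=1$ follows from $\mu=\gcd(r,2m+1)$) happens iff $l_2\mid k$.

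For the upper bound I would follow Algorithm~\ref{algrdom2}. Write $k=\tilde c\, l_2+\tilde r$ with $0\le \tilde r<l_2$ (when $k\le l_2$ just take $\tilde c=0$, $\tilde r=k$). The set produced is $D=\textbf{DOM}(n,m,\langle S_1\rangle,\tilde r)\cup S_2\cup\cdots\cup S_{\tilde c+1}$. Each of the $\tilde c$ full sets $S_2,\dots,S_{\tilde c+1}$ contributes exactly $l_2$ to $|N[v]\cap D|$ for every $v$, by Lemma~\ref{l_2domino}; so it suffices to show that $\textbf{DOM}(n,m,\langle S_1\rangle,\tilde r)$ is an $\tilde r$-tuple dominating set of size $\tilde c' $ for the right $\tilde c'$. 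Here the main content is an inductive analysis of \textbf{DOM}: using the $1$-contiguous ordering $\langle S_1\rangle=(s^1_1,\dots,s^1_{n/\mu})$ and the remark that $c+1$ consecutive elements of $\langle S_i\rangle$ form a $1$-tuple dominating set while a ``circular movement by $2m+1$'' advances the covered arc by exactly $2m+1$ positions, one shows that after the $h$-th pass the partial set is an $h$-tuple dominating set using $h(c+1)-$(overlaps) $=hc+1$ vertices — more precisely that $\gamma_{\times h}$-many vertices of $S_1$, namely $hc+\lceil hl_1/l_2\rceil$ of them wait, I should phrase it as: \textbf{DOM} with parameter $\tilde r$ returns a set of size $\tilde r c+\lceil \tilde r l_1/l_2\rceil$. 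Adding the $\tilde c$ full $S_i$'s of size $n/\mu=cl_2+l_1$ each gives $|D|=\tilde c(cl_2+l_1)+\tilde r c+\lceil \tilde r l_1/l_2\rceil = (\tilde c l_2+\tilde r)c+\tilde c l_1+\lceil \tilde r l_1/l_2\rceil = kc+\lceil kl_1/l_2\rceil=\lceil kn/(2m+1)\rceil$, using that $\tilde c l_1 + \lceil \tilde r l_1/l_2\rceil=\lceil (\tilde c l_2+\tilde r)l_1/l_2\rceil$ since $\tilde c l_1$ is an integer. One must also check $k\le 2m+1$ guarantees $\tilde c+1\le \mu$ so the sets $S_2,\dots,S_{\tilde c+1}$ exist and are disjoint from the chosen part of $S_1$.

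For the lower bound I would use the same double-counting as in Proposition~\ref{kigualal2}: if $D$ is any $k$-tuple dominating set, then
\begin{equation*}
k n\le \sum_{v\in V(W_n^m)}|N[v]\cap D|=\sum_{v\in D}|N[v]|=|D|\,(2m+1),
\end{equation*}
so $|D|\ge kn/(2m+1)$, and since $|D|$ is an integer, $|D|\ge\lceil kn/(2m+1)\rceil$. This, combined with the construction above, finishes the proof.

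The part I expect to be the genuine obstacle is the precise correctness and size analysis of procedure \textbf{DOM} — i.e. verifying that greedily walking along $\langle S_1\rangle$ by $1$-contiguous steps, restarting the ``arc cover'' $\tilde r$ times, really yields an $\tilde r$-tuple dominating set and that the number of vertices used is exactly $\tilde r c+\lceil \tilde r l_1/l_2\rceil$ rather than something one off. The subtlety is the bookkeeping of how the inner \texttt{while} loop's restart position $t$ relates to the wrap-around by $n$ positions after each pass, and the fact that consecutive passes share the boundary vertex $s^i_j$; this is where a careful induction on $h$ (tracking which circular arc of $[1,n]$ is already $h$-fold covered and the index $t$ at which pass $h+1$ begins) is needed, and where the hypotheses $n\ge 2m+1$ and $\gcd(l_2,cl_2+l_1)=1$ (Lemma~\ref{coprimos}) get used to ensure the walk cycles through all of $S_1$ without premature repetition.
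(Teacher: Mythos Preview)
Your approach is correct and matches the paper's: the lower bound is the identical double-counting argument, and the upper bound uses the output of Algorithm~\ref{algrdom2} with the same case split $k\lessgtr l_2$ and the same arithmetic reduction $|D|=\tilde c\,\frac{n}{\mu}+|\textbf{DOM}(n,m,\langle S_1\rangle,\tilde r)|=\lceil kn/(2m+1)\rceil$.

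The one place worth comparing is exactly the step you flag as the obstacle. You propose to show $|\textbf{DOM}(n,m,\langle S_1\rangle,\tilde r)|=\tilde r c+\lceil \tilde r l_1/l_2\rceil$ by an inductive index-tracking argument along $\langle S_1\rangle$. The paper avoids this bookkeeping: it notes that after pass $h$ of \textbf{DOM} the total coverage satisfies $(2m+1)|D|=\sum_v|N[v]\cap D|=hn+r_h$ for some $1\le r_h\le 2m$, because each pass advances by $1$-contiguous steps until it first overshoots the $n$-wrap, and a single such step has length $2m+1$. From $\frac{hn+1}{2m+1}\le |D|\le \frac{hn+2m}{2m+1}$ one gets $|D|=\lceil hn/(2m+1)\rceil$ immediately (the cases $r=0$ and $k=l_2$, where $kn/(2m+1)$ is an integer, are handled separately at the outset). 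So the paper's route replaces your induction on $h$ with a one-line overshoot estimate; both are valid, but the latter is shorter.
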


\begin{proof}
From \cite{AEU}, we know  that $k\left\lfloor \frac{n}{2m+1}\right\rfloor\leq\gamma_{\times k}(W_n^{m})\leq k \left\lceil \frac{n}{2m+1}\right\rceil$. When  $r=0$, it holds   $\left\lfloor \frac{n}{2m+1}\right\rfloor= \left\lceil \frac{n}{2m+1}\right\rceil=\frac{n}{2m+1}$ and the result follows immediately. 

\smallskip
We can then assume $r\neq 0$.

If $k=l_2$, the result follows from Proposition \ref{kigualal2}.

If $k \neq l_2$, let us prove first that every $k$-tuple dominating set in $W_n^m$ has at least the desired size. Let $D$ be a $k$-tuple dominating set in $W_n^m$. Then in each column of $M^*(W_n^m)$,  there are at least $k$ 1-entries among the rows associated with vertices in $D$. 
Thus, the row submatrix of the rows associated with the vertices in $D$ has at least $kn$ $1$-entries. Since each row of this submatrix has exactly $2m+1$ 1-entries, it is clear that $|D| \geq \left\lceil \frac{kn}{2m+1}\right\rceil$.  
Therefore, $\gamma_{\times k}\left(W_n^{m}\right)\geq \left\lceil\frac{kn}{2m+1}\right\rceil$.

To prove the reverse inequality, let us take the $k$-tuple dominating set $D$ returned by Algorithm 1 and count its elements.

\begin{itemize}

\item[Case 1] $k < l_2$.  
On the one hand, we have that $$\sum_{v\in V(W^m_n)} |N[v]\cap D| =\sum_{v\in D}\left|N[v]\right|=(2m+1)\left|D\right|.$$


On the other hand, observe that when  iteration $k$ of procedure \textbf{DOM} ($n$, $m$, $\langle S_1 \rangle$, $k$) ends, each vertex of $W^m_n$ has at least $k$ vertices in its closed neighborhood belonging to $D$. Thus, since a  vertex is incorporated to $D$ following the 1-contiguous relation and its closed neighborhood has size $2m+1$,  we can write

\begin{center}
$(2m+1)\left|D\right|=kn+r_k\;$  with $\;1\leq r_k\leq 2m.$
\end{center}
Notice that in the previous equality, $r_k$ cannot be  equal to  0  since $\frac{kn}{2m+1} \notin \mathbb{Z}$ ($\frac{kn}{2m+1} $ is an integer if and only if $\frac{kl_1}{l_2}$ is an integer, which cannot be the case since
$l_1$ and $l_2$
are coprime, and $k < l_2$). Besides, $r_k$ cannot be  equal to $2m+1$  since Procedure  \textbf{DOM} would have stopped a step earlier.

Since  $$\frac{kn+1}{2m+1}\leq \frac{kn+r_k}{2m+1} \leq \frac{kn+2m}{2m+1},$$   it follows that  $$\left|D\right|=\frac{kn+r_k}{2m+1}=\left\lceil\frac{kn}{2m+1}\right\rceil.$$

\item[Case 2] $k > l_2$.
Let us denote $k=l_2\widetilde{c}+\widetilde{r}$ where $0\leq \widetilde{r}\leq l_2-1$. In this case $D=\bigcup\limits_{i=2}^{\widetilde{c}+1}S_i \cup \widetilde{D}$, where  $\widetilde{D}\subseteq S_{1}$ is obtained from  procedure \textbf{DOM} ($n$, $m$, $\langle S_1 \rangle$, $\widetilde{r}$).
$$\left|D\right|=\left|\bigcup_{i=2}^{\widetilde{c}+1} S_i\right|+\left\lceil \frac{\widetilde{r}n}{2m+1}\right\rceil =\sum_{i=2}^{\widetilde{c}+1} \left|S_i\right|+\left\lceil \frac{\widetilde{r}n}{2m+1}\right\rceil =
\widetilde{c}\frac{n}{\mu} +\left\lceil \frac{\widetilde{r}n}{2m+1}\right\rceil =$$

$$\left\lceil \widetilde{c} \frac{n}{\mu}+\frac{\widetilde{r}n}{2m+1}\right\rceil=\left\lceil \frac{\widetilde{c}l_2n+\widetilde{r}n}{2m+1}\right\rceil=\left\lceil \frac{\left(\widetilde{c}l_2+\widetilde{r}\right)n}{2m+1}\right\rceil=\left\lceil \frac{kn}{2m+1}\right\rceil.$$

\end{itemize}
\end{proof}

\begin{corollary}
For each  fixed $k$ and given  a web $W^m_n$ with $k\leq 2m+1$,  Algorithm $1$ runs in linear-time.

\end{corollary}

\begin{proof}
On the one hand, it is straightforward to check that \textbf{PROC}($n$,$m$,i) returns the set $\left\langle S_i\right\rangle=\left(s_1^{i},s_2^{i}, \ldots,s_{\frac{n}{\mu}}^{i}\right)$ in time $\mathcal{O}(n)$.

Algorithm 1 calls several times \textbf{PROC}($n$,$m$,i) which takes at most $(\mu-1)\frac{3}{\mu}n$ operations.

On the other hand,  {\textbf{ DOM}}{($n$, $m$, $\langle S_i \rangle$, $\alpha$)} runs in at most 
 $16(2m+1)c+2$ steps.

In all,  Algorithm $1$ runs in linear-time.
\end{proof}












\bigskip

\medskip





\section{Conclusions}

 In this work, we developed novel algorithms for $k$-tuple domination problems in two subclasses of circular-arc graphs. The time complexity was improved, from $\mathcal{O}(n^{6k+4})$ \cite{Belmonte2013}, \cite{Bui2013} to $\mathcal{O}(n^2)$ (for co-biconvex graphs) and to $\mathcal{O}(n)$ (for web graphs). 

For co-biconvex graphs, the algorithm  developed works for each $ 2\leq k\leq |U|+3$, where $U$ is the set of universal vertices of the input  graph. We remark that,  when the augmented adjacency matrix of the input graph is given in the form of Figure \ref{esquema}, our algorithm runs in  linear-time. We think that the techniques used in Sections \ref{aux} and \ref{stable}  together with the more general result in Theorem \ref{alfas} can be further developed to make a treatment of  the  problem for the remaining values of $k$.

Second, we presented a linear-time algorithm for web graphs for every admissible value of $k$. In this way we  completed the study initiated in \cite{AEU}, where the problem had been already solved for any  web graph $W^m_n$ but only for the cases $k=2$ and $k=2m$. 

On the one hand, it would be interesting to try to extend the techniques used in this work to the whole class of concave-round graphs which contains both, co-biconvex graphs and web graphs. 

On the other hand, we believe that some of the results in this work can be modified to find also more efficient algorithms for  total $k$-domination (i.e., with open neighborhoods)  without too many modifications. 


\section{Acknowledgements} This work was partially supported by grants PICT ANPCyT 0410 (2017-2020) and 1ING631 (2018-2022).

\end{document}